\newtheorem{thm}{Theorem}
\newtheorem{lem}{Lemma}
\newtheorem*{mydef}{Definition}
\newtheorem{case}{Case}
\newtheorem*{corlem}{Corollary}
\newtheorem{cor}{Corollary}
\newtheorem*{obs}{Observation}
\newtheorem*{lem1}{Corner Lemma}
\newtheorem*{lem2}{Merging Lemma}
\newtheorem*{lem3}{Large Cut Set Lemma}
\newcommand{\tria}[1]{\text{tri}_{#1}}
\newcommand{\tri}[1]{\text{tri}\left(#1\right)}
\newcommand{\sq}[1]{G_{#1,#1}}
\newcommand{\floor}[1]{\left\lfloor #1\right\rfloor}
\newcommand{\ceil}[1]{\left\lceil #1\right\rceil}
\title{
On the Rank Number of Grid Graphs
}
\author{
Sitan Chen \fnref{sc}\\
\emph{Massachusetts Institute of Technology, Cambridge, MA 02139}\\
\url{sitanchen@college.harvard.edu}}
\date{August 9, 2012}
\begin{document}

\pagestyle{fancy}
\lhead{}
\chead{}
\rhead{}
\lfoot{}
\cfoot{}
\rfoot{}
\renewcommand{\headrulewidth}{0mm}

\thispagestyle{fancyplain}
\begin{frontmatter}
\begin{abstract}
A vertex $k$-ranking is a labeling of the vertices of a graph with integers from 1 to $k$ so any path connecting two vertices with the same label will pass through a vertex with a greater label.  The \emph{rank number} of a graph is defined to be the minimum possible $k$ for which a $k$-ranking exists for that graph.  For $m\times n$ grid graphs, the rank number has been found only for $m\le 3$.  In this paper, we determine its for $m=4$ and improve its upper bound for general grids.  Furthermore, we improve lower bounds on the rank numbers for square and triangle grid graphs from logarithmic to linear.  These new lower bounds are key to characterizing the rank number for general grids, and our results have applications in optimizing VLSI circuit design and parallel processing, search, and scheduling.
\end{abstract}
\end{frontmatter}

\setstretch{1}
\pagestyle{fancyplain}

\section{Introduction}

Given a graph $G$, a \emph{graph labeling} is a function that takes the vertices $V(G)$ in a graph $G$ to a subset of the integers, subject to certain constraints.  One notable graph labeling, a $k$-coloring, labels a graph with $k$ colors so no two adjacent vertices share the same color \cite{kcoloring}.

The very large-scale integration (VLSI) circuit layout motivated the study of $k$-ranking, which can be thought of as a generalization of $k$-coloring.  A VLSI circuit consists of a large number of transistors and wires contained within a multi-layer chip.  If we treat transistors as vertices and wires as edges in a graph, then many graph properties are related to circuit features \cite{embedding,testability}.  In particular, $k$-ranking was first studied for its connections to finding minimal separators of graphs \cite{node,trees}.  Finding minimal separators is key to minimizing VLSI layout area, which is directly related to hardware expense \cite{vlsi}.

In this paper, we first introduce some definitions and previous results in Section 2. Then in Section 3, we determine the closed form for the rank number of $G_{4,n}$, and in Section 4, we improve previous upper bounds for $\chi_r(G_{m,n})$.  In \cite{alpert}, Alpert called for a strong lower bound for $\chi_r(\sq{n})$, which is key to completely determining $\chi_r(G_{m,n})$, so in Section 5, we present a linear lower bound.  As a corollary, this also gives a linear lower bound for the rank number of triangle grids $\tria{n}$ (see Figure~\ref{fig:tri}).

The results on $\chi_r(G_{4,n})$ and $\chi_r(G_{m,n})$ and the new methods used to obtain them offer insight on how to approach completely determining the rank number of grid graphs given a suitable lower bound on the rank number of square grid graphs, one that we provide.  Combined, the results of this research provide substantial groundwork for solving the open problem of determining $\chi_r(G_{m,n})$.  

Furthermore, the novel methods we introduce to study minimal separators in obtaining an improved lower bound on $\chi_r(\sq{n})$ are key not only to optimizing VLSI circuit design, but also to optimizing many parallel algorithms, including parallel scheduling of multi-part product assembly in manufacturing systems, searching for corruptions in partially ordered data structures, parallel query processing, and Cholesky factorizing matrices in parallel \cite{parallel}.

\section{Fundamentals}

Gallian \cite{survey} gives the following definition.

\begin{mydef}
A labeling function $f: V(G)\to\{1,...,k\}$ is a \emph{$k$-ranking} of a graph $G$ if each path between two vertices of the same value passes through a vertex with a larger value.  The \emph{rank number} $\chi_r(G)$ is the smallest $k$ for which a $k$-ranking of $G$ exists.  Furthermore, a $k$-ranking is \emph{minimal} if no label can be replaced with a smaller value and still satisfy the conditions for a ranking.
\end{mydef}

Note this is a generalization of $k$-coloring, which only involves paths of length 1, to paths of arbitrary length.  We can make several self-evident observations.

\begin{lem}
For a connected graph, the largest value $k$ used in its $k$-ranking is unique.
\end{lem}

\begin{lem}
If $H$ is a subgraph of $G$, then $\chi_r(H)\le\chi_r(G)$.  
\end{lem}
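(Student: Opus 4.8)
The plan is to show that any optimal ranking of $G$ restricts to a valid ranking of $H$ without increasing the number of labels used. Let $k=\chi_r(G)$ and fix a $k$-ranking $f\colon V(G)\to\{1,\dots,k\}$. I would define $g$ to be the restriction $f|_{V(H)}$ and then argue that $g$ is itself a $k$-ranking of $H$; since $g$ takes values in $\{1,\dots,k\}$, this immediately yields $\chi_r(H)\le k=\chi_r(G)$ by the minimality in the definition of the rank number.

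The key step is verifying that $g$ satisfies the ranking condition on $H$. I would take any two vertices $u,v\in V(H)$ with $g(u)=g(v)$ and let $P$ be an arbitrary path between them in $H$. Because $H$ is a subgraph of $G$, every edge of $P$ is also an edge of $G$, so $P$ is simultaneously a path in $G$. Applying the ranking property of $f$ to $P$ viewed inside $G$, there must exist a vertex $w$ on $P$ with $f(w)>f(u)$. The crucial observation is that $w$ lies on a path that is entirely contained in $H$, hence $w\in V(H)$ and $g(w)=f(w)>g(u)$. Thus every $H$-path between equal-labeled vertices passes through a vertex of strictly larger label, which is exactly the condition required of a ranking of $H$.

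The only genuine obstacle here is conceptual rather than computational: one must confirm that the larger-labeled witness vertex $w$ guaranteed by $f$ actually belongs to $H$, so that it continues to serve as a witness for the restricted labeling $g$. This is immediate once we note that $w$ lies on a path lying wholly within $H$, so no vertex outside $H$ is ever consulted. Since restriction can only shrink the set of labels in use, $g$ uses at most $k$ distinct values, and the bound $\chi_r(H)\le\chi_r(G)$ follows.
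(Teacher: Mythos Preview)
Your proof is correct and complete. The paper does not actually give a proof of this lemma; it is listed among ``several self-evident observations'' and stated without argument, so your restriction argument is the natural justification that the paper leaves implicit.
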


Lemmas 1 and 2 imply that the rank number monotonically increases with the addition of vertices \cite{kostyuk}.

\begin{lem}
For any vertex $v\in G$, $\chi_r(G)-1\le \chi_r(G-v)\le \chi_r(G)$.
\end{lem}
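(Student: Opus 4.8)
The upper bound is immediate: since $G-v$ is a subgraph of $G$, Lemma 2 gives $\chi_r(G-v)\le\chi_r(G)$. So the work lies entirely in the lower bound $\chi_r(G)-1\le\chi_r(G-v)$, which I would recast as the equivalent statement $\chi_r(G)\le\chi_r(G-v)+1$.

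The plan is to build a valid ranking of $G$ out of an optimal ranking of $G-v$ by spending exactly one extra label on $v$. Let $f$ be a ranking of $G-v$ achieving $\chi_r(G-v)$, and define $g$ on $V(G)$ by $g(u)=f(u)$ for $u\neq v$ and $g(v)=\chi_r(G-v)+1$. I claim $g$ is a $(\chi_r(G-v)+1)$-ranking of $G$, which would yield the bound at once.

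To verify the ranking condition, consider any two distinct vertices $a,b$ with $g(a)=g(b)$ and any path $P$ between them. Since $v$ is the unique vertex carrying the top label $\chi_r(G-v)+1$, neither $a$ nor $b$ equals $v$, so in fact $g(a)=g(b)=f(a)=f(b)$. I would then split into two cases according to whether $P$ passes through $v$. If $P$ avoids $v$, then $P$ is a path in $G-v$, and because $f$ is a ranking of $G-v$ it contains an interior vertex of strictly larger label. If $P$ does pass through $v$, then $v$ itself is an interior vertex (as $a,b\neq v$) whose label exceeds every other label assigned by $g$, so in particular it exceeds the common value $g(a)$. In either case $P$ meets a vertex of larger label, so $g$ is a valid ranking of $G$ and $\chi_r(G)\le\chi_r(G-v)+1$.

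The argument is short, and the only point requiring care is the case analysis on whether a connecting path routes through $v$: the construction works precisely because assigning $v$ the maximal label neutralizes every path that uses $v$, while paths avoiding $v$ are already handled by the optimality of $f$. I would close by remarking that both inequalities are tight in general, since deleting a vertex can genuinely lower the rank number by one, which is what makes the single extra label both necessary and sufficient.
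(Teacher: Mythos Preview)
Your proof is correct and is the standard argument: extend an optimal ranking of $G-v$ by assigning $v$ the fresh top label, then split paths according to whether they pass through $v$. The paper itself does not give a proof of this lemma; it is listed among several ``self-evident observations'' and stated without justification, so your write-up in fact supplies what the paper omits.
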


\begin{figure}[h]
 \begin{center}
 \subfloat[$P_5$ or $G_{1,5}$]{\label{fig:path}{\includegraphics[width=3cm]{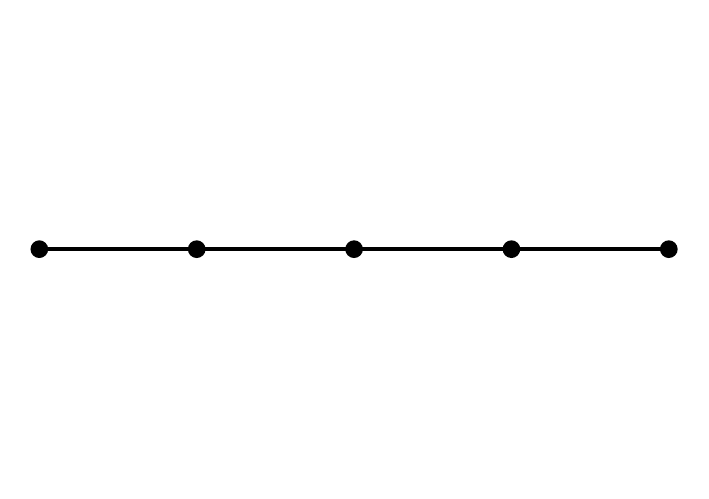}}}
\hspace{0.3cm}
 \subfloat[$G_{4,5}$]{\includegraphics[angle=90,height=1.9cm]{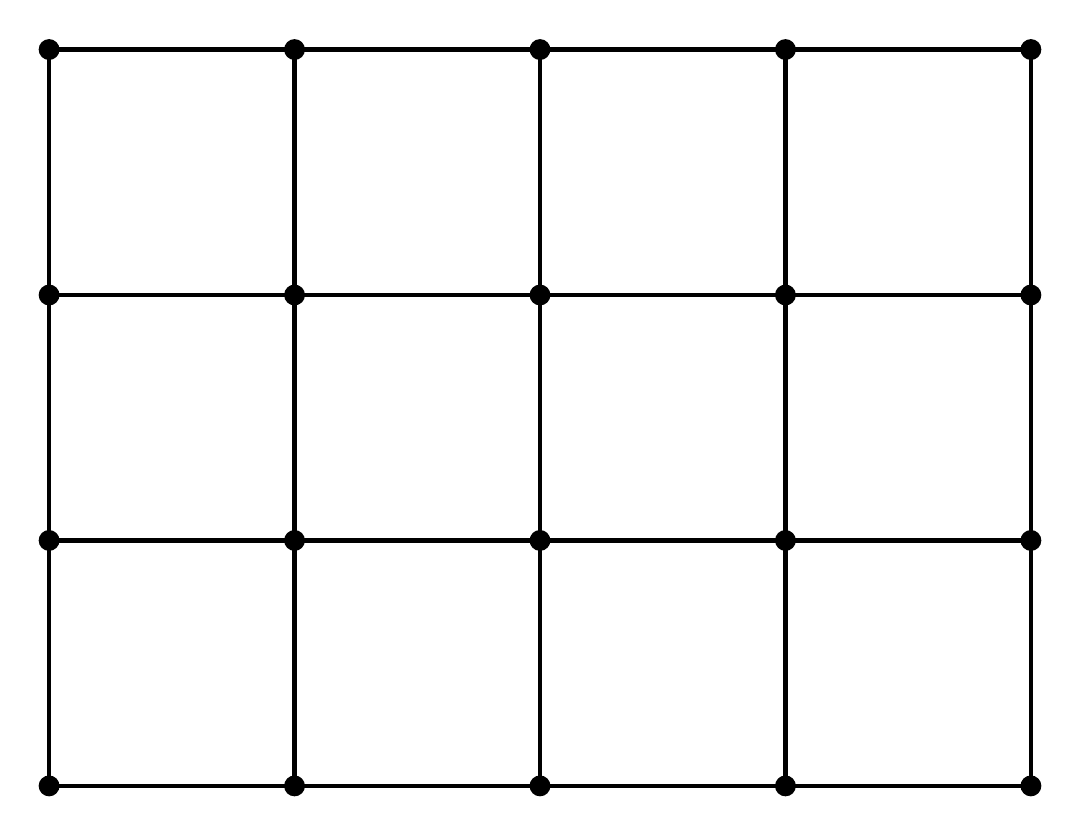}}
\hspace{0.3cm}
 \subfloat[$\tria{5}$]{\label{fig:tri}\includegraphics[height=2cm]{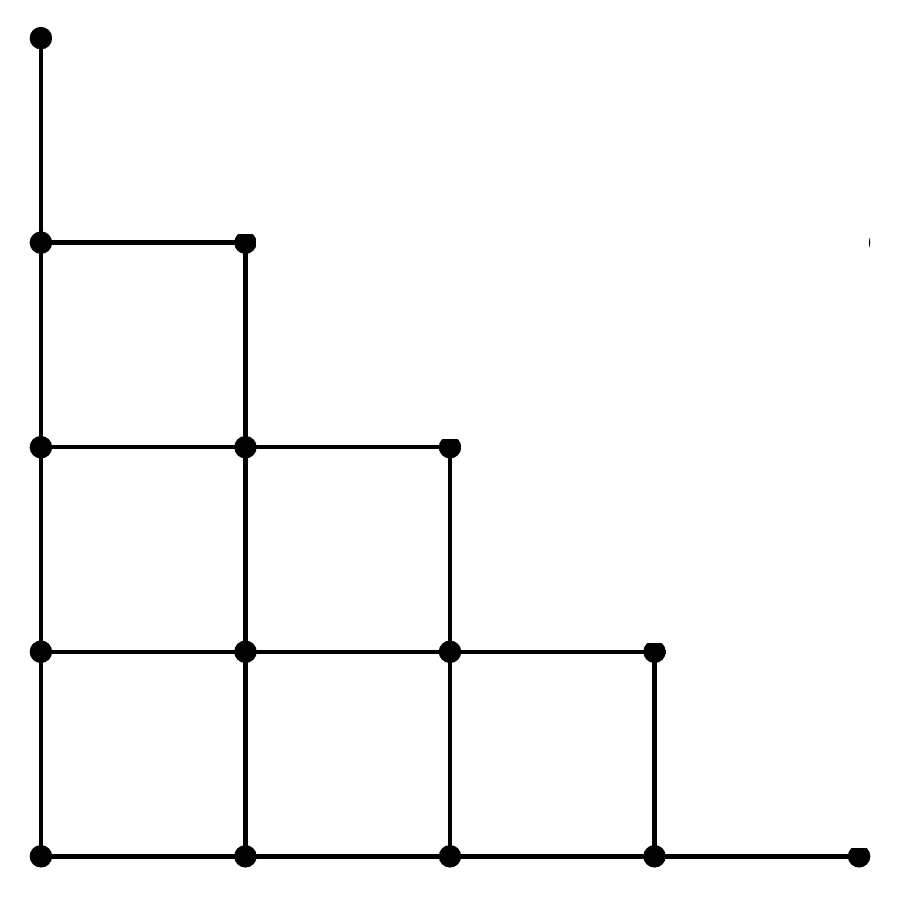}}
 \caption{Notation for path, grid, and triangle graphs.}
 \end{center}
\end{figure}

For simplicity, we denote the rank number of $G_{m,n}$ by $r(m,n)$. According to Gallian \cite{survey}, the rank number of a path graph (Figure~\ref{fig:path}) is known to be $\chi_r(P_n)=r(1,n)=\floor{\log_2(n)}+1=1+\chi_r(P_{\ceil{\frac{n-1}{2}}})$. Chang et al. \cite{chang} showed that $r(2,n)=2+r(2,\floor{\frac{n-2}{2}})$;  Alpert \cite{alpert} showed that
\begin{equation*}
r(3,n)= 
\begin{dcases}
4+r\left(3,\ceil{\frac{n-3}{2}}\right) &\text{for} \ n=15\cdot4^k+7\cdot\frac{4^k-1}{3}+\frac{3\pm1}{2}\\
3+r\left(3,\ceil{\frac{n-3}{2}}\right) &\text{for} \ n\neq15\cdot4^k+7\cdot\frac{4^k-1}{3}+\frac{3\pm1}{2}
\end{dcases}
\end{equation*}  

The rank number of a grid graph $G_{m,n}$, however, is an open problem for $m\ge 4$.  

\section{Rank Number of $G_{4,n}$}

Let $b(n)=2b_2+b_3$, where $b_i$ is the $i$th most significant bit of $n$.

\begin{thm}
For $n>8$ if $n=2^k+2^{k-2}-2$ or $2^k+2^{k-2}-1$, then $r(n)=4k-2$.  Otherwise, \begin{equation*}r(4,n)=4\floor{\log_2(n+1)}-3+b(n+1).\end{equation*}

Alternatively, if $I=[2^k-1,2^k]\cup\{2^k+2^{k-1}-2\}\cup[2^k+2^{k-1}+2^{k-2}-1,2^k+2^{k-1}+2^{k-2}]$, then \begin{equation*}r(4,n)= 
\begin{dcases}
	5+r\left(4,\ceil{\frac{n-4}{2}}\right) & \ \emph{if} \  n \in I\\
	4+r\left(4,\ceil{\frac{n-4}{2}}\right) & \ \emph{if} \  n \not\in I.
\end{dcases}
\end{equation*}\end{thm}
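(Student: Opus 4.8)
The plan is to reduce the closed-form statement to the recursive one. Since the theorem asserts the two descriptions are equivalent, I would first verify by a routine induction on $k=\floor{\log_2(n+1)}$ that the recursion $r(4,n)=c+r\!\left(4,\ceil{\frac{n-4}{2}}\right)$ with $c\in\{4,5\}$ reproduces the explicit value $4\floor{\log_2(n+1)}-3+b(n+1)$ for $n\notin I$ and the value $4k-2$ at the exceptional widths $n=2^k+2^{k-2}-2,\,2^k+2^{k-2}-1$; this reduces to tracking how the top bits of $n+1$ evolve when $n\mapsto\ceil{\frac{n-4}{2}}$, together with a finite base-case check (say all $n\le 8$) carried out directly or via the constructions of the earlier sections. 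The real content is then to prove the recursion itself by matching upper and lower bounds.

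For the upper bound I would give an explicit recursive ranking. Writing $n'=\ceil{\frac{n-4}{2}}$, I would designate a narrow vertical separator near the middle of the $4\times n$ grid and assign its vertices the top labels $r(4,n')+1,\dots,r(4,n')+c$ in a fixed pattern chosen so that (i) the labeling restricted to the separator is itself a valid ranking and (ii) it \emph{dominates} both sides, meaning every path crossing from the left block to the right block passes through a separator vertex whose label exceeds the labels of both endpoints. Granting (i) and (ii), I would place two independent optimal rankings of $G_{4,n'}$ on the two sides using labels $1,\dots,r(4,n')$: Lemma 2 guarantees each side is individually valid, while (ii) guarantees validity across the cut, so the whole labeling is a ranking and $r(4,n)\le c+r(4,n')$. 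The task is to exhibit such a separator with $c=4$ for $n\notin I$ and with $c=5$ for $n\in I$.

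For the lower bound I would exploit that the highest labels are forced to act as a vertex separator. By Lemma 1 the maximum label is unique, and in fact the vertices carrying the top labels must disconnect the grid into components that reuse the lower labels independently. Starting from an optimal ranking with $R=r(4,n)$ labels, the strategy is to show that the vertices labeled above $R-c$ form a cut whose removal cannot leave every remaining component of width less than $n'$; hence some surviving component contains a $G_{4,n'}$ subgrid, which by Lemma 2 forces $R-c\ge r(4,n')$, i.e. $R\ge c+r(4,n')$. The engine here is a width-counting argument special to the four-row grid: because each of the few top labels can occur only a bounded number of times (each is the unique maximum of the connected region obtained after deleting the strictly larger labels), the top $c$ classes have limited capacity to sever all four rows at once, and quantifying this capacity is precisely what separates the generic $c=4$ regime from the exceptional $c=5$ regime.

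The step I expect to be the main obstacle is pinning down the set $I$ in the lower bound: proving that at exactly the widths in $I$ four top labels cannot realize a sufficiently balanced separator, so that a fifth label is unavoidable, whereas for $n\notin I$ four labels suffice. This requires a tight structural classification of the minimal separators of $G_{4,n}$ together with careful bookkeeping of how the floor/ceiling in $\ceil{\frac{n-4}{2}}$ interacts with the binary expansion of $n+1$. I would attack it by casework on the residue and top-bit structure of $n$, verifying that the transition points coincide exactly with $n=2^k+2^{k-2}-2,\,2^k+2^{k-2}-1$ and with the interval endpoints defining $I$.
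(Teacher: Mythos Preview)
Your high-level skeleton --- sandwich $r(4,n)$ between recursive upper and lower bounds and then pin down the transition points --- matches the paper's. But two of your concrete mechanisms do not work as stated, and the paper's actual arguments turn on ideas you have not identified.

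\textbf{Upper bound.} You propose a ``narrow vertical separator'' carrying $c$ top labels, with optimal rankings of $G_{4,n'}$, $n'=\ceil{\frac{n-4}{2}}$, on each side. With four unique top labels you have only four separator vertices. If they form a single column, the larger piece has $\ceil{\frac{n-1}{2}}$ columns, not $n'$; if they form a diagonal spanning four columns, the two pieces are not rectangles at all but $4\times n'$ grids with a triangular ``staircase'' attached. So your construction does not produce two copies of $G_{4,n'}$. The paper's fix is exactly to work with these staircase pieces: it introduces \emph{sticky ends}, proves a Merging Lemma that combines a $G_{4,n}$ with one sticky end and a $G_{4,n-1}$ with two sticky ends across a diagonal cut of four high labels, and iterates this from small explicit base rankings. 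Without this device your recursion for $c=4$ does not close.

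\textbf{Lower bound and the set $I$.} You frame the $n\in I$ case as ``four top labels cannot realize a sufficiently balanced separator, so a fifth is unavoidable.'' That is not the mechanism. The paper shows (and you can too) that for $n>3$ any minimal cut set has at least four vertices, and one may assume exactly four without loss. The point is not the \emph{size} of the separator but the \emph{shape} of what survives: after removing a four-vertex cut, one component always contains not merely $G_{4,\ceil{(n-4)/2}}$ but $G_{4,\ceil{(n-2)/2}}$ minus a single corner (the paper's Corner Lemma). For $n\in I$ this corner-deleted grid, or a further variant with an ``escalator end'' or one of five specific augmented shapes $A_k,\dots,E_k$, has rank number one higher than the naive estimate; the sharpened lower bound comes from inductively tracking the rank numbers of these modified grids, not from forcing a larger separator. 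Your proposed ``casework on the residue and top-bit structure of $n$'' would not discover these auxiliary graph families, which are where the real work lies in Cases~1--4 of the paper.
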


Before we prove this, we define some terms.  Let $\alpha$ be the largest value that is used more than once in a given minimal ranking $f$ of $G_{4,n}$ and $T_{\alpha}$ the set of vertices labeled with values greater than $\alpha$.  Let the \emph{cut set} $C(f)$ be a minimal subset of vertices in $T_{\alpha}$ so that the vertices labeled $\alpha$ are in different connected components of $G\backslash C(f)$, and let $|C(f)|$ be the number of vertices in $C(f)$.

We first find tight lower and upper bounds for the rank number of $4\times n$ grids.  

\subsection{Lower Bound}

\begin{lem}
For $n>5$, \begin{equation*}r(4,n)\ge 4+r\left(4,\ceil{\frac{n-4}{2}}\right).\end{equation*}
\end{lem}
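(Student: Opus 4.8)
The plan is to establish a recursive lower bound by analyzing the cut set $C(f)$ of a minimal ranking $f$ of $G_{4,n}$. The governing intuition is that in a $k$-ranking, the vertices labeled with the maximal repeated value $\alpha$ are separated into connected components by the ``large'' vertices in $T_\alpha$, and each such component is itself a grid that must be properly ranked using only labels up to $\alpha$. If I can show that $C(f)$ must contain enough vertices — spread across enough columns — then removing those columns splits the grid into two pieces, at least one of which is a $4\times\lceil(n-4)/2\rceil$ grid, and by Lemma~2 its rank number is at most $\alpha-1$. Combined with a lower bound on how many \emph{distinct labels} the cut set itself forces above $\alpha$, this should yield the additive $+4$.

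First I would formalize the separation structure: since $G_{4,n}$ is $4$ rows tall, any vertical ``barrier'' that disconnects the left part of the grid from the right part must hit all four rows, so a separating set has a column-by-column footprint. I would argue that $C(f)$, being a minimal separator of the $\alpha$-labeled vertices, occupies some set of columns, and that the removal of these columns leaves a component that is a $4\times n'$ subgrid. The key counting step is to bound $n'$ from below: if the cut set used too few columns, one side would retain more than $\lceil(n-4)/2\rceil$ columns, and I would choose which side to keep so that the retained subgrid has at least $\lceil(n-4)/2\rceil$ columns — the ``$-4$'' absorbing the at most four columns the separator can consume while still guaranteeing a clean split near the middle. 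This retained subgrid is ranked by $f$ using labels at most $\alpha$, so $\chi_r(G_{4,n'}) \le \alpha$, giving $\alpha \ge r(4,\lceil(n-4)/2\rceil)$.

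The second half is to show $r(4,n) \ge \alpha + 4$, i.e.\ that at least four labels strictly exceed $\alpha$, or equivalently that $|C(f)|$ together with uniqueness forces four ranks above $\alpha$. Here I would use that the separator must span all four rows in at least one column (otherwise the two sides of the grid remain connected through the unblocked row), and that within $T_\alpha$ the labels are all distinct on any path argument is not needed — rather, the four vertices in a full separating column carry four labels all greater than $\alpha$, and by the definition of $\alpha$ as the largest \emph{repeated} value, every label exceeding $\alpha$ is used exactly once, so four such vertices in a column contribute four distinct ranks above $\alpha$. Stitching these together gives $r(4,n) = \alpha + (\text{number of ranks above }\alpha) \ge \alpha + 4 \ge 4 + r(4,\lceil(n-4)/2\rceil)$.

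The main obstacle I anticipate is the interplay between the two bounds: the ``keep the big side'' argument for the subgrid and the ``full column forces four large labels'' argument must be made compatible, since a separator that is cheap (few large vertices) is good for the subgrid bound but bad for the four-label bound, and vice versa. In particular I expect the delicate case is when $C(f)$ does \emph{not} contain a full four-vertex column — then I must show the separator instead threads through multiple columns in a staircase pattern, and argue that such a pattern still forces at least four distinct labels above $\alpha$ (via a Menger-type or connectivity argument counting vertex-disjoint left-to-right paths, each of which the separator must block). Handling this degenerate geometry carefully, and verifying that the column budget for the subgrid split is exactly four in the worst case, will be the technical heart of the proof.
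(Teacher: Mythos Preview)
Your overall strategy---show $|C(f)|\ge 4$ and then extract a large subgrid from the complement---matches the paper's, but your proposed justification for $|C(f)|\ge 4$ has a real gap. A Menger-type count of vertex-disjoint left-to-right paths would force a separator of size four only if $C(f)$ actually disconnects the left end of the grid from the right end. It need not: $C(f)$ is merely required to separate the vertices labeled $\alpha$, and one of those may well be a corner (degree~$2$) or an edge vertex (degree~$3$), in which case Menger yields only $|C(f)|\ge 2$ or $3$. The paper closes exactly this hole by exploiting \emph{minimality} of the ranking rather than connectivity: if $|C(f)|=2$ the cut set must be the two neighbors of a corner, so that corner carries the label $\alpha$, and minimality then forces $\alpha=1$; but for $n>2$ some label larger than $1$ must repeat, a contradiction. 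The case $|C(f)|=3$ is dispatched the same way (the small component has rank number at most $2$, forcing $\alpha\le 2$, impossible for $n>3$). Your outline never invokes minimality in this fashion, and without it the bound $|C(f)|\ge 4$ simply does not follow.

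A smaller issue: your claim that the retained subgrid ``is ranked by $f$ using labels at most $\alpha$'' is not correct as stated, since $G_{4,n}\setminus C(f)$ may still contain vertices of $T_\alpha\setminus C(f)$ carrying labels larger than $\alpha$. The clean inequality is that the labels appearing on $C(f)$ are pairwise distinct and disjoint from those on the complement (both because every label above $\alpha$ occurs exactly once), whence $r(4,n)\ge |C(f)|+\chi_r\bigl(G_{4,n}\setminus C(f)\bigr)$ directly. Once $|C(f)|\ge 4$ is secured via the minimality argument, the four cut vertices sit in at most four consecutive columns, and the larger remaining piece contains $G_{4,\lceil (n-4)/2\rceil}$, which is how the paper finishes.
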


\begin{proof}
For $G_{4,n}$, with $n>1$, clearly $|C(f)|>1$.  If $|C(f)|=2$, then the cut set must be the two neighbors of a corner, and $\alpha$ is the value of the corner.  Because the ranking is minimal, $\alpha=1$, but if $n>2$, then $\alpha>1$.  Similarly, if $|C(f)|=3$, then it is easy to check that any configuration of cut set vertices will split the graph into two connected components, one of which must have a rank number of at most 2.  Again, because the ranking is minimal, $\alpha\le 2$, but if $n>3$ then $\alpha>2$.  Thus, for $n>3$, we have that $|C(f)|\ge 4$.  In the cut set, any four vertices will occupy at most four consecutive columns, and $r(4,n)\ge 4+r\left(4,\ceil{\frac{n-4}{2}}\right),$ because the complement of those four vertices will contain $G_{4,\ceil{\frac{n-4}{2}}}$. \end{proof}

\subsection{Upper Bound}

\begin{figure}[h]
\begin{center}
\includegraphics[width=\textwidth]{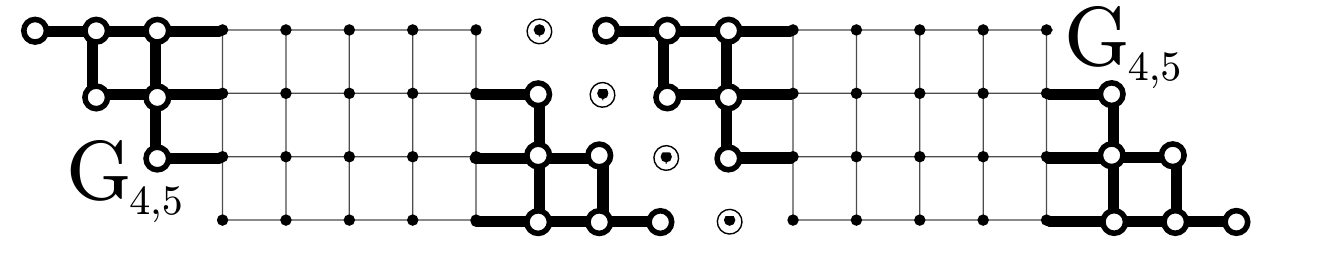}
\caption{Merging two $4\times 5$ grids with sticky ends (bolded) to get a $4\times 14$ grid with sticky ends.}
\label{fig:sticky}
\end{center}
\end{figure}

We next find the upper bound.  Define a \emph{sticky end} to be the staircase-like configuration attached to the end of a grid, bolded in Figure~\ref{fig:sticky}.

\begin{lem}
If $G_{4,n}$ with a sticky end on both sides can be labeled with $\lambda$ colors, then \begin{equation*}r(4,2n+4)\le \lambda+4.\end{equation*}
\end{lem}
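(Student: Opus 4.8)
The plan is to prove this constructively by exhibiting an explicit $\lambda+4$ ranking of $G_{4,2n+4}$ built from two copies of the sticky-ended $G_{4,n}$. The idea, suggested directly by Figure~\ref{fig:sticky}, is a \emph{merging} argument: take two labeled copies of $G_{4,n}$-with-sticky-ends, each using labels $1,\dots,\lambda$, place them facing each other so their sticky ends interlock, and fill the staircase-shaped gap between the two sticky ends with four new, large labels $\lambda+1,\lambda+2,\lambda+3,\lambda+4$. Since the two sticky ends each contribute some columns and the interlocking gap contributes the remainder, I would first verify the arithmetic that gluing two $4\times n$ blocks with sticky ends and a bridging region of four new-label vertices yields exactly a $4\times(2n+4)$ grid; the ``$+4$'' in the column count should correspond precisely to the four bridging columns occupied by the four new labels.

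The key steps, in order, are as follows. First I would formalize the geometry of a sticky end (which vertices of the staircase belong to the grid proper versus the protruding part) so that the two interlocked copies reconstruct $G_{4,2n+4}$ with no overlaps or omissions; this is the bookkeeping that makes the column arithmetic honest. Second, I would assign the four new labels $\lambda+1,\dots,\lambda+4$ to the four vertices bridging the two sticky ends, chosen so that they form a separating set: every path in $G_{4,2n+4}$ from the left copy to the right copy must cross one of these four vertices. Third, I would verify the ranking condition. Within each copy the condition already holds by hypothesis (each copy is a valid $\lambda$-ranking of $G_{4,n}$-with-sticky-end). For two vertices with equal label lying in \emph{different} copies, any connecting path must traverse the bridge and hence pass through one of the new vertices, whose label $\lambda+1,\dots,\lambda+4$ strictly exceeds $\lambda\ge$ their common value. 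The only remaining cases are pairs among the four new vertices themselves: since these four all carry \emph{distinct} labels, no two of them share a value, so no ranking violation can occur among them.

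The main obstacle I anticipate is the third step in the subcase where a repeated label $v\le\lambda$ appears once in each copy but the two occurrences are connected by a path that enters and exits the bridge region, or worse, a path lying entirely within one copy that is nonetheless ``shortcut'' through sticky-end vertices shared across the seam. I would need to argue that the four new labels genuinely separate the two copies in $G_{4,2n+4}$ and that no merge-induced adjacency creates a same-label path avoiding all larger labels; concretely, this means checking that every edge of $G_{4,2n+4}$ crossing from the left copy's vertex set to the right copy's vertex set is incident to one of the four new vertices. Establishing this separation cleanly is the crux, and I expect the staircase shape of the sticky end to be engineered exactly so that the four bridging vertices form a minimal cut of the desired form, mirroring the cut-set analysis used in the lower-bound Lemma. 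Once separation is confirmed, the upper bound $r(4,2n+4)\le\lambda+4$ follows immediately.
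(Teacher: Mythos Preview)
Your proposal is correct and follows essentially the same merging argument as the paper: place two identically labeled copies of $G_{4,n}$-with-sticky-ends so that their inner sticky ends interlock, insert the four labels $\lambda+1,\dots,\lambda+4$ on the diagonal separating them, and observe that this diagonal is a genuine cut set. One small point you glossed over: the construction actually produces $G_{4,2n+4}$ \emph{with two sticky ends} (the outer ones survive), and the paper then invokes the subgraph inequality (Lemma~2) to pass to the plain grid $G_{4,2n+4}$; your column arithmetic and separation check are otherwise exactly what is needed.
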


\begin{proof}Place two such $G_{4,n}$ graphs with sticky ends, both colored in the same way, together and let the cut set be four vertices between these two graphs, colored with $\lambda+1$, $\lambda+2$, $\lambda+3$, and $\lambda+4$.  This gives a $(\lambda+4)$-ranking of $G_{4,2n+4}$ with two sticky ends.  Because $G_{4,2n+4}$ with no sticky ends is its subgraph, $r(4,2n+4)\le \lambda+4$.\end{proof}

Figure~\ref{fig:sticky} shows an example for constructing $G_{4,14}$ with two sticky ends out of two copies of $G_{4,5}$ with two sticky ends.


In \cite{alpert}, Alpert showed that $r(4,3)=6$.  We show in \ref{app:basecase} that $r(4,4)=7$, $r(4,5)=8$, $r(4,6)=8$, $r(4,7)=9$, and $r(4,8)=10$.  Define $s(4,n)$ to be the rank number of $G_{4,n}$ with two sticky ends.  We also show by construction in \ref{app:basesticky} that $s(4,1)\le5$, $s(4,2)\le6$, $s(4,3)\le7$, $s(4,4)\le8$.


We rewrite the lower bound as $r(4,2n+3)\ge 4+r(4,n)$ and the upper bound as $r(4,2n+4)\le s(2n+4)\le 4+s(n)$ and combine these bounds so that for each $n$, $r(4,n)$ is bounded between two consecutive integers.  We can condense these combined bounds to get that for $k\ge 3$, where $k=\floor{\log_2(n+3)}$,

\begin{equation}4k-4+i\le r(4,n)\le 4k-3+i \hspace{0.5cm} \text{for} \hspace{0.5cm} n\in[B_{k,i},B_{k,i+1}),\end{equation}\label{eq:bounds} where $0\le i\le 3$ and $B_{k,0}=2^k-3$, $B_{k,1}=2^k+2^{k-2}-3$, $B_{k,2}=2^k+2^{k-1}-3$, $B_{k,3}=2^k+2^{k-1}+2^{k-2}-3$, a nd $B_{k,4}=2^{k+1}-3$.

\subsection{Completely Characterizing $r(4,n)$}


We restrict to cases where $n>8$ and $|C(f)|=4$. Note that the latter preserves generality for the arguments following this lemma because given a cut set of four vertices and the corresponding widest possible induced subgrid, an addition of $k$ extra vertices to the cut set will occupy at most $k$ vertices of that subgrid. Thus, the rank number of the subgrid with these vertices deleted is at most $k$ less than the original subgrid's rank number, while the addition of $k$ vertices to the cut set increases the rank number by $k$. This, combined with the result in section 3.1 that $|C(f)|\ge4$, lets us restrict our attention to grids of exactly four cut set vertices for the rest of Section 3.

In each interval $[B_{k,i},B_{k,i+1})$, we aim to find the closed form of $n$ such that $r(4,n+1)>r(4,n)$. 

We first prove two lemmas. The first, the Merging Lemma, will provide a construction that we will show for each integer $0\le i\le 3$ guarantees the existence of a $(4k-4+i)$-ranking for all $G_{4,n_1}$, where $n_1\in[B_{k,i},M]$ for some integer $M\le B_{k,i+1}$. The second, the Corner Lemma, will be used to show there only exists a $(4k-3+i)$-ranking for all $G_{4,n_2}$, where $n_2\in(M,B_{k,i+1})$.

\begin{figure}[h]
 \begin{center}
  \includegraphics[width=\textwidth]{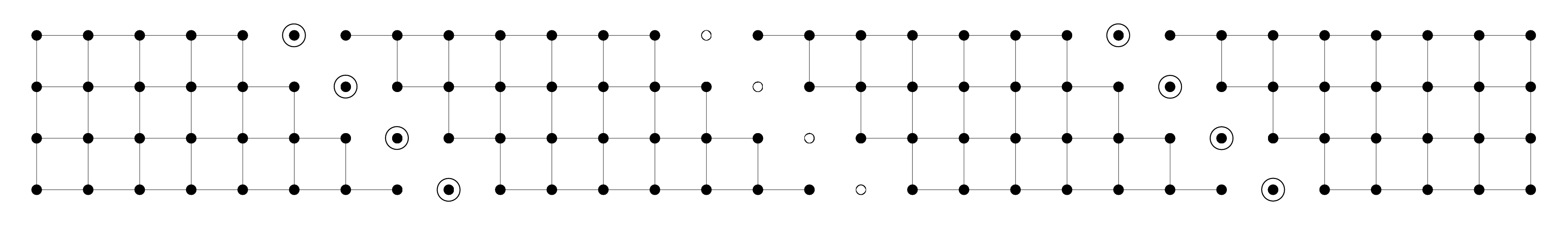}
 \end{center}
 \caption{$G_{4,2^5-2}$ split by 4 high vertices (white) into two copies of $G_{4,2^4-3}$, which are each split by 4 high vertices (circled) into $G_{2^{3}-3}$ with a sticky end and $G_{2^{3}-4}$ with two sticky ends}
 \label{fig:split}
\end{figure}

\begin{lem2} If $G_{4,n}$ with a sticky end has a $\lambda$-ranking and $G_{4,n-1}$ with two sticky ends has a $\lambda$-ranking, then $G_{4,2n+3}$ with a sticky end has a $(\lambda+4)$-ranking and $G_{4,4n+10}$ has a $(\lambda+8)$-ranking.\end{lem2}

\begin{proof} If $G_{4n+10}$ is bisected with a cut set of 4 high vertices descending diagonally from left to right in the middle, then the induced subgraphs are two copies of $G_{4,2n+3}$ each with one sticky end.  Each induced subgraph is further bisected with a cut set of 4 high vertices parallel to the sticky end to get a $G_{4,n}$ subgraph with a sticky end and a $G_{4,n-1}$ subgraph with two sticky ends.  Both have $\lambda$-rankings, so $G_{4,2n+3}$ with a sticky end has a $(\lambda+4)$-ranking and $G_{4,4n+10}$ has a $(\lambda+8)$-ranking.\end{proof}

This division of $G_{4,4n+10}$ is shown for $n=5$ in Figure~\ref{fig:split}. 

Next, define a \emph{grid graph $G$ missing a corner} to be $G \backslash v$, where $v$ is a corner vertex.

\begin{lem1}One of the two connected components induced from removing the cut set from $G_{4,n}$ contains the subgraph $G_{4,\ceil{\frac{n-2}{2}}}$ missing one corner.\end{lem1}

\begin{proof}

It is easy to see that this is true if the cut set occupies two or fewer consecutive columns.  If the cut set occupies three columns, then at least one of the vertices in the cut set on the top or bottom row will not share a column with the rest of the cut set.  If we remove the other three points in two columns, we induce a $G_{4,\ceil{\frac{n-2}{2}}}$ subgraph, and by removing the remaining point in the cut set, we remove its corner.

\begin{figure}[h]
 \begin{center}
  \includegraphics[scale=0.4]{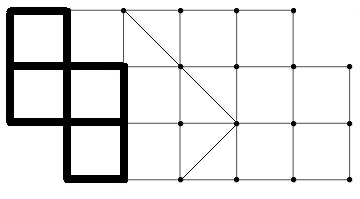}
 \end{center}
  \caption{Removing any cut set of $G_{4,7}$ missing two corners in different columns induces a subgraph of $G_{4,3}$ minus two corners in different columns}
  \label{fig:cutcorner}
\end{figure}
 
If the cut set occupies four columns, then we can similarly remove the middle two columns to induce a $G_{4,\ceil{\frac{n-2}{2}}}$ subgraph, and by removing either of the two remaining points, we remove its corner.  
\end{proof}

\begin{corlem} For $n\ge 5$, if the rank number of $G_{4,\ceil{\frac{n-2}{2}}}$ missing two corners in different columns is $\lambda$, then the rank number of $G_{4,n}$ missing two corners in different columns is at least $\lambda+4$.\end{corlem}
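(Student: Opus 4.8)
The plan is to mirror the proof of the Corner Lemma, but to keep track of the two corners already absent from the larger grid so that both of them reappear in the induced subgrid. Fix a minimal $\chi_r$-ranking $f$ of $G := G_{4,n}$ missing two corners in different columns; since the four corners of a $4\times n$ grid lie only in columns $1$ and $n$, this means $G$ is missing one left corner and one right corner. Let $\alpha$, $T_\alpha$, and $C(f)$ be as defined above. By the discussion following the statement of the Corner Lemma we may assume $|C(f)|=4$, and because $\alpha$ is the largest label used more than once, every label exceeding $\alpha$ occurs exactly once; hence the four vertices of $C(f)$ carry four distinct labels, each strictly greater than $\alpha$.

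The main step is geometric: I claim one of the two components of $G\setminus C(f)$ contains $S := G_{4,\ceil{\frac{n-2}{2}}}$ missing two corners in different columns (the case $n=7$ is depicted in Figure~\ref{fig:cutcorner}). Following the Corner Lemma, the four cut vertices span two, three, or four consecutive columns; deleting the at most two columns holding three of them splits $G$ into a left block and a right block, the larger of which has width at least $\ceil{\frac{n-2}{2}}$ and contains a copy of $G_{4,\ceil{\frac{n-2}{2}}}$. The Corner Lemma already supplies that deleting the stray cut vertex removes the subgrid corner adjacent to the cut. The new ingredient is that the surviving block reaches an extreme column of $G$ (column $1$ or column $n$), and that column carries one of the originally missing corners; since one missing corner sits in column $1$ and the other in column $n$, and the cut separates the left columns from the right, the surviving block inherits exactly one of them at its outer end. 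Thus $S$ is missing its cut-adjacent corner and one outer corner, and because $n\ge 5$ forces the width $\ceil{\frac{n-2}{2}}\ge 2$, these two corners lie in different columns. I would organize this as a short case analysis on whether the cut spans two, three, or four columns, verifying in each case, and in the boundary situation where the cut sits near an edge, that the surviving block still attains width $\ceil{\frac{n-2}{2}}$ and still captures an outer missing corner. This bookkeeping is where I expect the only real friction.

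Given the embedding $S\subseteq K$ for one component $K$, the count is immediate. The restriction $f|_S$ is a ranking of $S$, so it uses at least $\chi_r(S)=\lambda$ distinct labels. The four labels on $C(f)$ are distinct, exceed $\alpha$, and each occurs exactly once in all of $G$; since $C(f)$ is disjoint from $K\supseteq S$, none of these four labels appears in $S$. Hence $f$ uses at least $\lambda+4$ distinct labels, and as every label lies in $\{1,\dots,\chi_r(G)\}$ we obtain $\chi_r(G)\ge\lambda+4$, which is the claim. The whole argument hinges on the geometric step; the counting is the same disjoint-palette argument used for the Section 3.1 lower bound, applied now to the corner-deficient subgrid rather than the full one.
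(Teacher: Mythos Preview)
Your proposal is correct and follows essentially the same route as the paper: invoke the Corner Lemma's geometry on $G_{4,n}$ minus two corners, observe that the surviving $G_{4,\ceil{(n-2)/2}}$ subgrid is missing one corner coming from a cut vertex and one corner inherited from the original missing pair (hence in different columns once the width is at least $2$), and then finish with the $\lambda+4$ label count. The paper's proof is a two-sentence sketch stating exactly this; your version simply spells out the disjoint-palette counting and flags the edge-case bookkeeping that the paper leaves implicit.
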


\begin{proof}
The Corner Lemma applied to $G_{4,n}$ missing two corners induces one connected component containing a subgrid one of whose missing vertices is a cut set vertex of $G_{4,n}$. The other missing vertex is one of the two missing vertices of $G_{4,n}$. This gives the desired lower bound.
\end{proof}

This corollary illustrated in Figure~\ref{fig:cutcorner} for $n=7$.

Now we are ready to prove our main theorem.  We proceed by casework for the intervals in which $n$ resides.


\begin{case}If $B_0\le n<B_1$, then $r(4,n+1)\ge r(4,n)$ only if $n=B_0+1=2^{k}-2$.\end{case}

\begin{proof}We show an 8-ranking of $G_{4,5}$ with one sticky end in Figure~\ref{fig:4-5sticky} of \ref{app:baseonesticky}.  Then by iteration of the Merging Lemma, there exists a $4k-4$ ranking of $G_{4,2^k-2}$.  However, for $G_{4,2^k-1}$, by the Corner Lemma, any cut set will induce $G_{4,2^{k-1}-1}$ minus a corner.  Because $G_{4,3}$ missing two corners has a rank number of 5, as shown in Figure~\ref{fig:4-3cut1} in \ref{app:corner}, by the corollary of the Corner Lemma, $G_{4,2^{k-1}-1}$ missing two corners has a rank number of at least $4k-7$, so $r(2^k-1)\ge4k-3$.\end{proof}


\begin{case}If $B_1\le n<B_2$, then $r(4,n+1)>r(4,n)$ only if $n=B_1=2^{k}+2^{k-2}-3$.\end{case}

\begin{proof}


\begin{figure}[h]
 \centering
  \subfloat[L-piece]{\label{fig:case2merging1}{\includegraphics[height=1.25in]{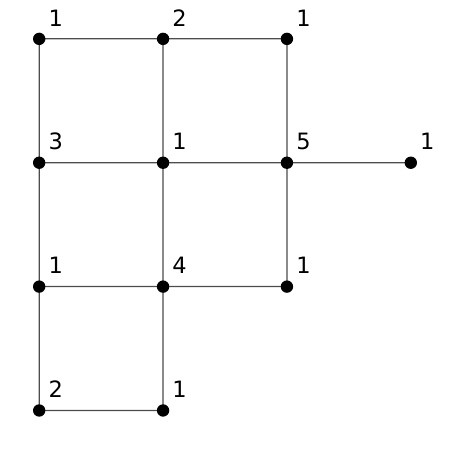}}}
  \subfloat[C-piece]{\label{fig:case2merging2}{\includegraphics[height=1.25in]{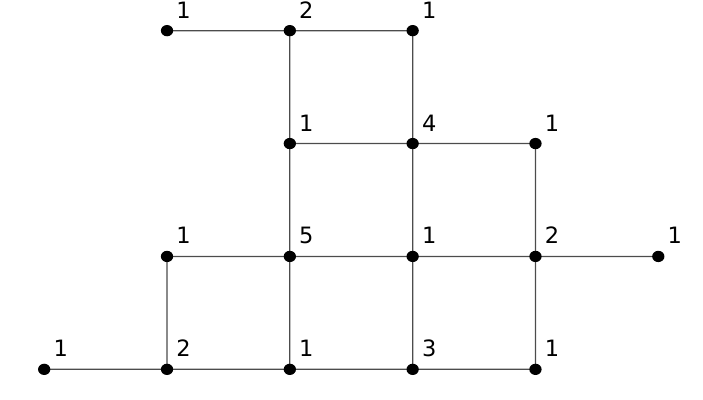}}}
  \subfloat[R-piece]{\label{fig:case2merging3}{\includegraphics[height=1.25in]{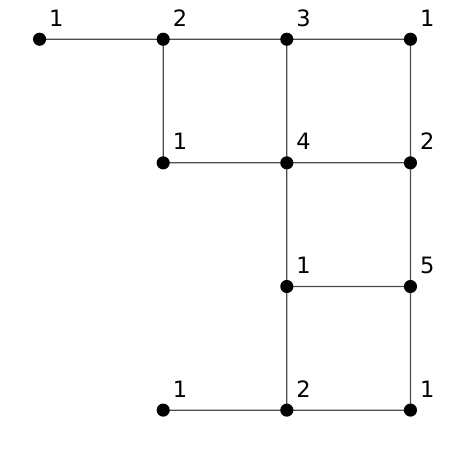}}}
  \caption{Building blocks for constructing $(4k-3)$-ranking of $G_{4,2^k+2^{k-2}-3}$}
  \label{fig:case2merging}
\end{figure}

\begin{figure}[h]
\centering
\includegraphics[height=1.25in]{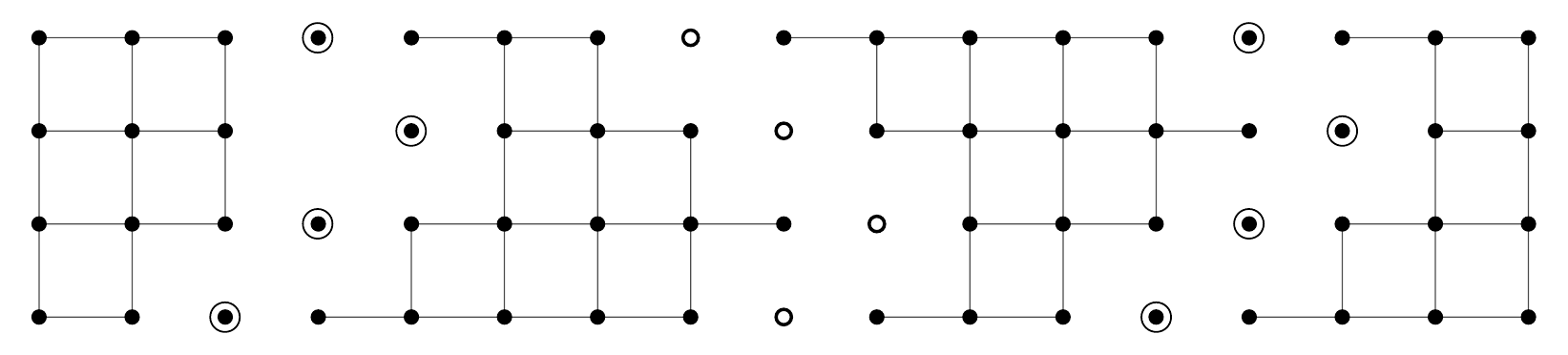}
\caption{Merging Lemma variant for case 2}
\label{fig:4-17merging}
\end{figure}

\begin{figure}[h]
\centering
\includegraphics[height=1in]{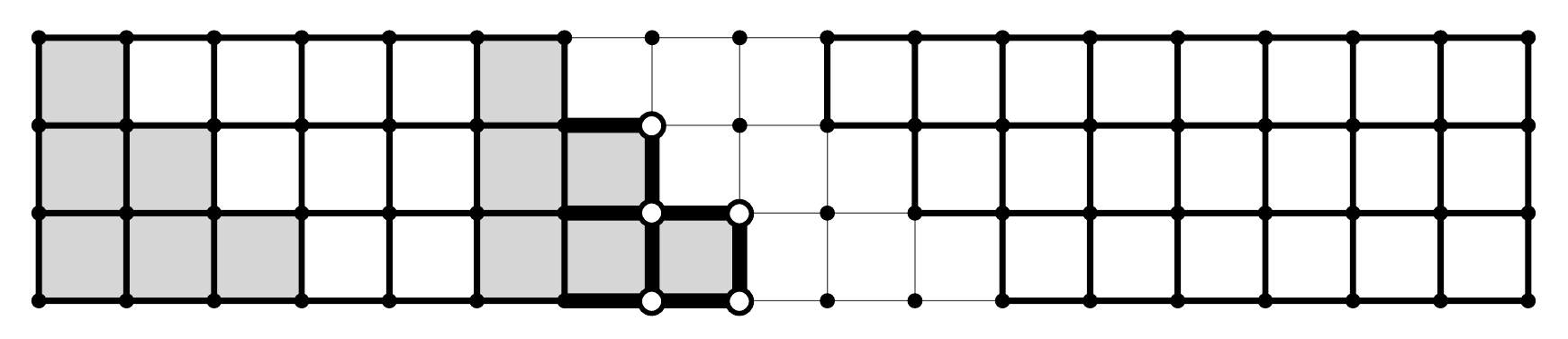}
\caption{Any cut set of $G_{4,2^{k}+2^{k-2}-2}$ must induce a $4\times(2^{k-1}+2^{k-3}-3)$ grid with an escalator end (bolded), any cut set of this must then induce a smaller grid with an escalator end (bolded)}
\label{fig:escalator}
\end{figure}

We use a variant of the Merging Lemma to prove that $r(4,2^k+2^{k-2}-3)=4k-3$. Observe that a $4\times(2^k+2^{k-2}-3)$ grid can be decomposed into one L-piece, $2^{k-2}-2$ C-pieces, and one R-piece, separated by sets of high vertices occupying three columns. An example of this construction for $k=4$ is shown in Figure~\ref{fig:4-17merging}.

Because L-, C-, and R- pieces all have rank numbers of 5, by this construction we know that $r(4,2^k+2^{k-2}-3)=4k-3$. To prove that no $(4k-3)$-ranking exists for $G_{4,2^k+2^{k-2}-2}$, note that any cut set will induce either a $4\times(2^{k-1}+2^{k-3}-3)$ grid with what a set of five extra vertices and eight extra edges (bolded in Figure~\ref{fig:escalator}) which we will call an \emph{escalator end}, or a $4\times(2^{k-1}+2^{k-3}-2)$ grid. Because $r(4,8)=10$, it suffices to show that the former has no $(4k-7)$-ranking. It is evident that $G_{4,2}$ with an escalator end has no 5-ranking, and as Figure~\ref{fig:escalator} illustrates, any cut set of $G_{4,2^k+2^{k-2}-3}$ with an escalator end necessarily induces a $4\times(2^{k-1}+2^{k-3}-3)$ grid with an escalator end, so we are done by the inductive hypothesis. Therefore, $r(4,2^k+2^{k-2}-2)=4k-2$.\end{proof}

\begin{case}If $B_2\le n<B_3$, then $r(4,n+1)>r(4,n)$ only if $n=B_2+1=2^{k}+2^{k-1}-2$.\end{case}

\begin{proof}We show a 6-ranking of $G_{4,3}$ with one sticky end in Figure~\ref{fig:4-3sticky} of \ref{app:baseonesticky}.  Then by iteration of the Merging Lemma, there exists a $(4k-2)$-ranking of $G_{4,2^{k}+2^{k-1}-2}$.


It is not necessarily true that no $(4k-2)$-ranking of $G_{4,2^k+2^{k-1}-1}$ minus two corners in the same row necessarily exists (this is certainly not true of $k=3$), but we will prove that no $(4k-2)$-ranking exists when the corners are diagonally opposite.

\begin{figure}[h]
 \centering
  \subfloat[$A_{2}$]{\label{fig:case3-1}{\includegraphics[height=0.8in]{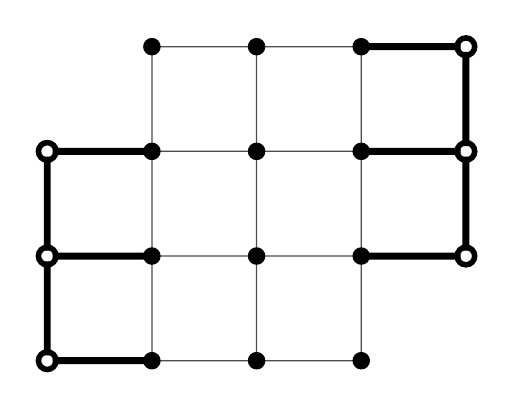}}}
  \subfloat[$B_{2}$]{\label{fig:case3-2}{\includegraphics[height=0.8in]{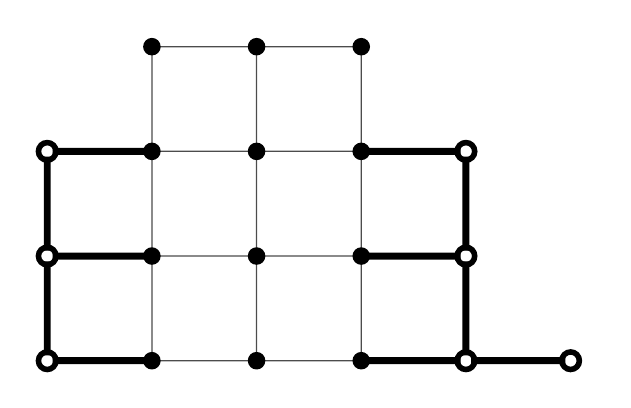}}}
  \subfloat[$C_{2}$]{\label{fig:case3-3}{\includegraphics[height=0.8in]{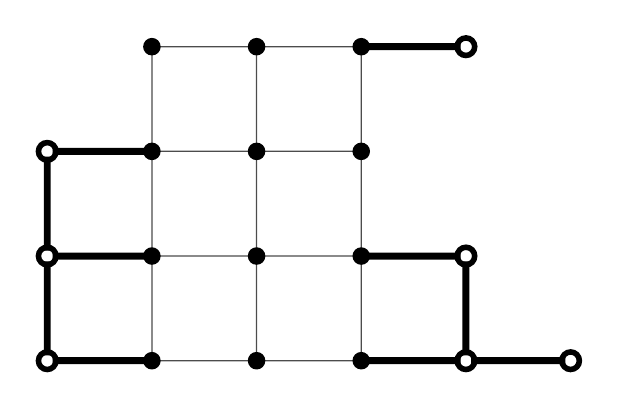}}}
  \subfloat[$D_{2}$]{\label{fig:case3-4}{\includegraphics[height=0.8in]{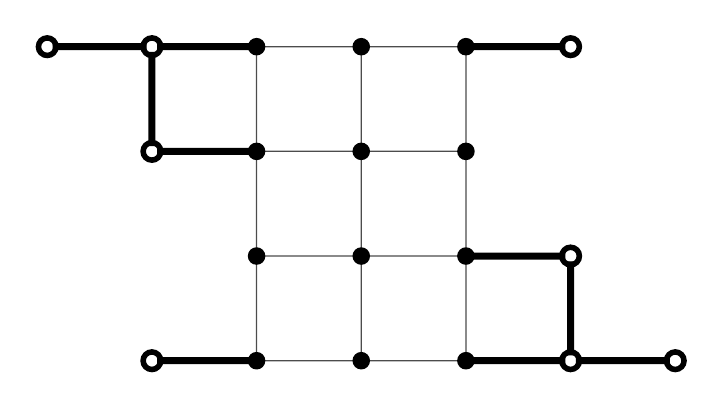}}}
  \subfloat[$E_{2}$]{\label{fig:case3-5}{\includegraphics[height=0.8in]{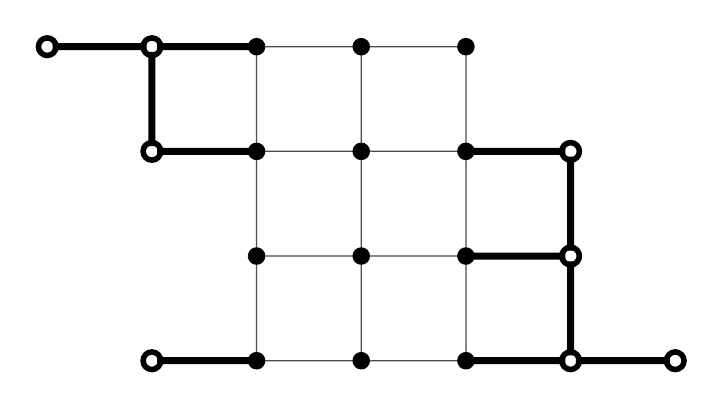}}}  
  \caption{Five sets of ends (bolded) appended to $G_{4,2^2+2^1-3}$ for which there is no 6-ranking}
  \label{fig:case3}
\end{figure}

Figure~\ref{fig:case3} shows a $4\times3$ grid with different sets of extra vertices and edges attached, and for ease of communication, in general we will call a $4\times(2^k+2^{k-1}-3)$ grid with one of these five sets of extra vertices and edges respectively $A_k$, $B_k$, $C_k$, $D_k$, $E_k$. We want to prove that $A_k$, a $4\times(2^k+2^{k-1}-1)$ grid with two diagonally opposite corners, admits no $(4k-2)$-ranking, so to do this, we induct on $k$ to prove a stronger result, that none of the five graphs in Figure~\ref{fig:case3} admits such a ranking. This is certainly true for $k=2$.

\begin{obs}
Any cut set of 

\begin{enumerate}[(i)]
\item $A_k$ or $B_k$ will induce a subgraph containing $A_{k-1}$, $B_{k-1}$, $C_{k-1}$, or $E_{k-1}$.
\item $C_k$ will induce a subgraph containing $A_{k-1}$, $B_{k-1}$, $C_{k-1}$, $D_{k-1}$, or $E_{k-1}$.
\item $D_k$ will induce a subgraph containing $C_{k-1}$, $D_{k-1}$, or $E_{k-1}$.
\item $E_k$ will induce a subgraph containing $B_{k-1}$, $C_{k-1}$, $D_{k-1}$, or $E_{k-1}$.
\end{enumerate}
\end{obs}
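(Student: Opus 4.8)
The plan is to prove this Observation by a direct, exhaustive case analysis on where a minimal cut set can sit inside each of the five graphs $A_k,\dots,E_k$. Recall from Section 3.1 that we may restrict attention to cut sets of exactly four vertices, and that four cut-set vertices occupy at most four consecutive columns. So for each of the five graph types I would enumerate the geometric placements of a four-vertex cut set (by how many consecutive columns it spans and whether it is flush against one of the appended ``ends'' or sits in the interior), and in each placement identify which of the two induced connected components is the larger ``left-half'' piece and what end-configuration it inherits. The engine driving the whole argument is the Corner Lemma: a four-vertex cut set always leaves one component containing a $G_{4,\ceil{(n-2)/2}}$-sized subgrid, and the only freedom is in how the boundary of that subgrid interacts with the extra vertices/edges of the end. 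Since $2^k+2^{k-1}-3$ halves (after subtracting the two cut columns) to $2^{k-1}+2^{k-2}-3$, the induced subgrid is exactly the right width to be one of $A_{k-1},\dots,E_{k-1}$, and the point is to show its inherited end is always one of the five admissible shapes.

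The key steps, in order: (1) For each graph type, classify cut sets by column-span ($2$, $3$, or $4$ columns, the span-$1$ case being impossible as in Lemma~3's argument). (2) For each class, apply the Corner Lemma to extract the wide induced subgrid and read off which stray vertex/edge of the original end survives attached to it — this determines the resulting $X_{k-1}$ label. (3) Check that every placement yields a subgraph \emph{containing} one of the listed smaller graphs; because ``containing'' only requires a subgraph relation (and by Lemma~2 rank numbers are monotone under subgraphs), I have slack — a placement that produces a slightly richer end still counts if it contains one of the five as a subgraph. (4) Collect the resulting labels for each of $A_k$ through $E_k$ and verify they match the four lists in the statement. The asymmetry between the lists (e.g., $D_k$ cannot produce $A_{k-1}$ or $B_{k-1}$, while $A_k$ cannot produce $D_{k-1}$) is exactly what this casework must certify, and it comes from which corners of the subgrid are blocked or freed by the particular end attached to each graph.

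The main obstacle will be the sheer combinatorial bookkeeping of the span-$3$ and span-$4$ cut-set placements against the five different ends, together with making the ``induced subgraph contains $X_{k-1}$'' claims precise rather than merely plausible from the figures. In particular, when the cut set sits flush against an end, the stray vertices of that end get absorbed into, or cut off from, the surviving subgrid in a way that is easy to assert from a picture but tedious to justify symbolically; I expect the right framing is to fix, for each end, a canonical pair of columns to delete (mirroring the explicit ``remove the middle two columns'' recipe in the Corner Lemma proof) so that the surviving end is forced into a named shape. A secondary subtlety is confirming \emph{exhaustiveness} — that the four bulleted lists really cover every admissible cut set and do not accidentally omit a placement that would produce a sixth, non-listed configuration; handling this cleanly is what lets the subsequent induction conclude that none of $A_k,\dots,E_k$ admits a $(4k-2)$-ranking, since each reduces to a strictly smaller member of the same five-graph family that by the inductive hypothesis has no $(4(k-1)-2)$-ranking.
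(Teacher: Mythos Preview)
Your proposal is correct in approach: the statement is proved by exhaustive case analysis on the placement of a four-vertex cut set inside each of the five graph types, with the Corner Lemma (and its corollary) supplying the width of the surviving subgrid and the end-configuration bookkeeping determining which of $A_{k-1},\ldots,E_{k-1}$ it contains. This is exactly the method implicit in the paper.

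It is worth noting, however, that the paper gives \emph{no} proof of this statement at all --- it is labeled an ``Observation'' and simply asserted, then immediately used to close the induction. So you are not matching or diverging from the paper's proof; you are supplying the verification the paper leaves to the reader. Your outline is the natural one, and your identification of the main obstacle (the span-$3$ and span-$4$ placements flush against an end, and the need to confirm exhaustiveness so that no sixth configuration arises) is accurate. One small correction to your width arithmetic: the base grids here have width $2^k+2^{k-1}-3$ \emph{plus} the appended end (so, e.g., $A_k$ is $G_{4,2^k+2^{k-1}-1}$ minus two diagonally opposite corners); when you apply the Corner Lemma you should track the end vertices as part of the column count rather than treating the width as exactly $2^k+2^{k-1}-3$, but the halving still lands you at the $k-1$ widths as you claim.
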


By the inductive hypothesis, we are done, so $A_{k}$, which is equivalent to $G_{4,2^{k}+2^{k-1}-1}$ missing two diagonally opposite corners, admits no $(4k-2)$-ranking, and $r(2^k+2^{k-1}-1)=4k-1$.\end{proof}

\begin{case}If $B_3\le n<B_4$, then $r(4,n+1)>r(4,n)$ only if $n=B_3+1=2^k+2^{k-1}+2^{k-2}-2$.\end{case}

\begin{proof}We show a 7-ranking of $G_{4,4}$ with one sticky end in Figure~\ref{fig:4-4sticky} of \ref{app:baseonesticky}.  Then by iteration of the Merging Lemma, there exists a $4k-1$ ranking of $G_{4,2^k+2^{k-1}+2^{k-2}-2}$.

\begin{figure}[h]
 \centering
  \subfloat[]{\includegraphics[width=0.26\textwidth]{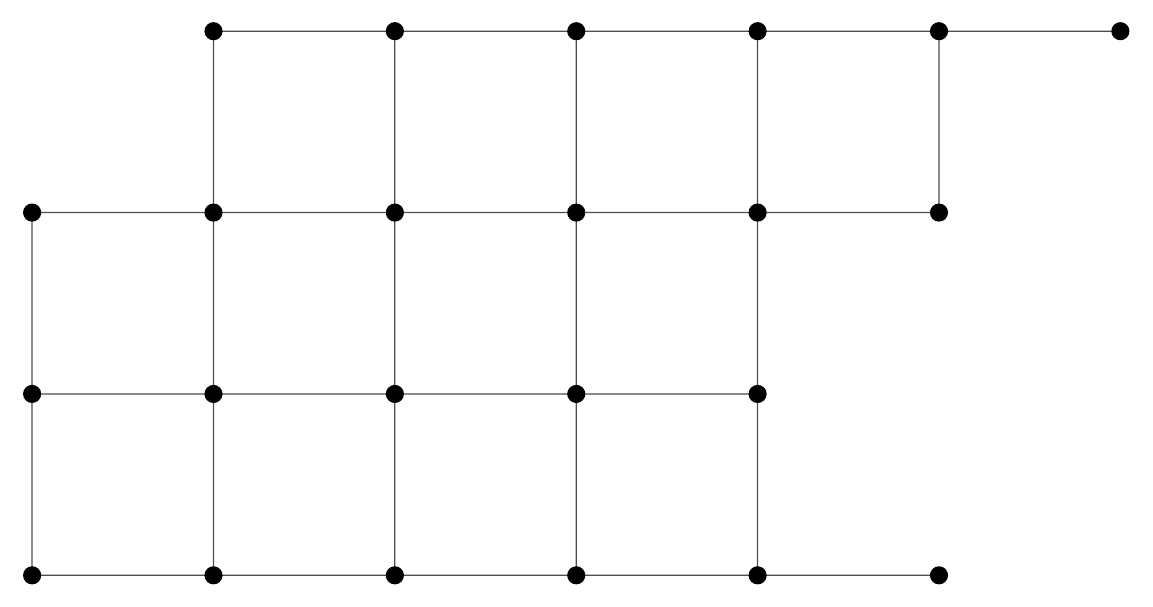}}
  \subfloat[]{\includegraphics[width=0.26\textwidth]{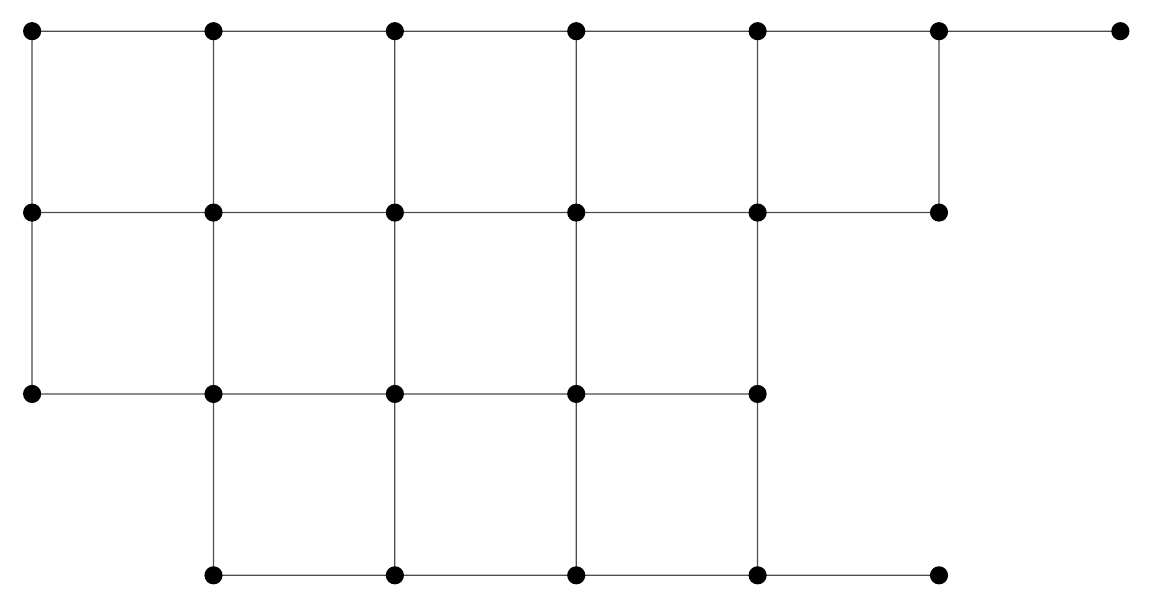}}
  \subfloat[]{\includegraphics[width=0.26\textwidth]{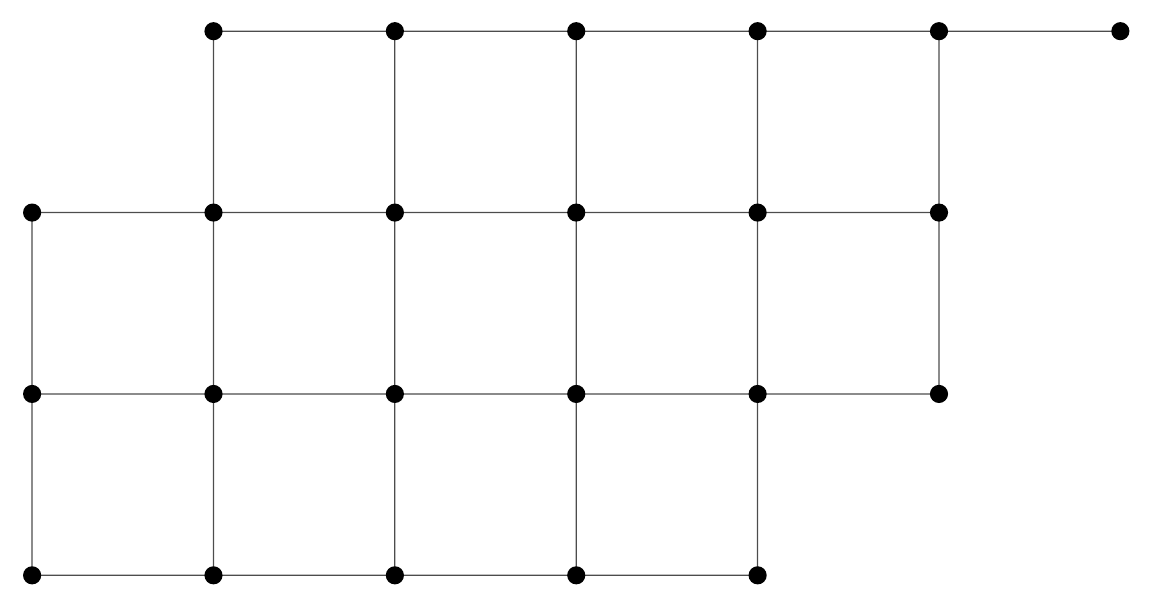}}
  \subfloat[]{\includegraphics[width=0.26\textwidth]{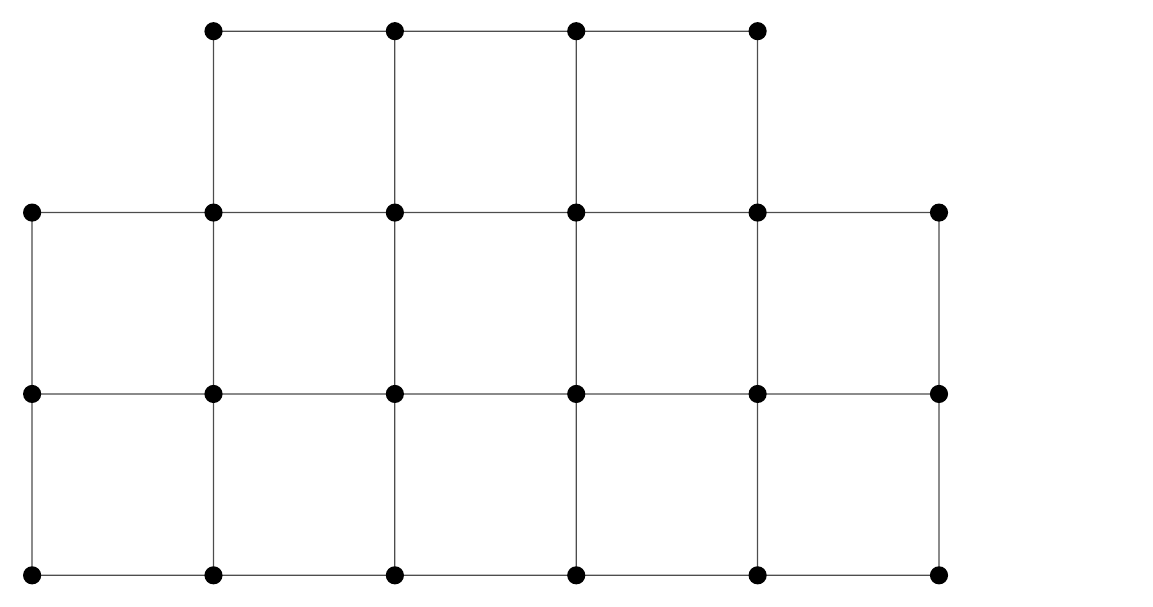}}   
  \caption{Four graphs, one of which any induced subgraph of $G_{4,13}$ must be a subgraph}
  \label{fig:13must}
\end{figure}

However, for $G_{4,2^k+2^{k-1}+2^{k-2}-1}$, by the Corner Lemma, any cut set will induce a subgraph $4\times(2^{k-1}+2^{k-2}+2^{k-3}-1)$ grid minus a corner. By  the bounds of\eqref{eq:bounds}, $r(4,13)\le 12$, so the rank number of $G_{4,13}$ minus two corners in different columns is at most 12. Observe that any cut set will induce a subgraph contained in one of the graphs in Figure~\ref{fig:13must}, and we prove in \ref{app:case4} that none of these has a 7-ranking, so $r(4,13)=12$, as desired.

Therefore, $r(4,13)=12$, as desired. By the corollary of the Corner Lemma, $G_{4,2^{k-1}+2^{k-2}+2^{k-3}-1}$ missing two corners has a rank number of at least $4k-4$, so $r(2^k+2^{k-1}+2^{k-2}-1)\ge4k$.\end{proof}

We have completely determined the rank number for $4\times n$ grid graphs.

\section{Upper Bound for General Grids}

In \cite{alpert}, Alpert showed that $r(m,n)\le m+r(m,\ceil{\frac{n-1}{2}})$ by constructing a ranking with $m$ high vertices in the middle column and $r(m,\ceil{\frac{n-1}{2}})$ smaller colors ranking the larger connected component \cite{alpert}.  Here we present an improved upper bound.

For simplicity, let $\chi_r(\tria{n})=\tri{n}$.  The subgraphs induced by removing a diagonal cut set of $m$ vertices from the middle of $G_{m,n}$ are $G_{m,\floor{\frac{n-m}{2}}}$ with one sticky end and $G_{m,\ceil{\frac{n-m}{2}}}$ with one sticky end, respectively, as shown in Figure~\ref{fig:Upper}.  The rank number of the latter is greater than that of the former.  Furthermore, $G_{m,\ceil{\frac{n-m}{2}}}$ with one sticky end can be broken up into $G_{m,\ceil{\frac{n-m}{2}}-1}$ and $\tria{m}$, so that $G_{m,\ceil{\frac{n-m}{2}}-1}$ is colored with the integers from 1 to $r\left(m,\ceil{\frac{n-m}{2}}-1\right)$, inclusive, while $\tria{m}$ is colored with the integers from $r\left(m,\ceil{\frac{n-m}{2}}-1\right)+1$ to $r\left(m,\ceil{\frac{n-m}{2}}-1\right)+\tri{m}$, inclusive.

\begin{figure}[h]
 \centering
  \subfloat[Vertical cut set (old)]{\includegraphics[width=0.4\textwidth]{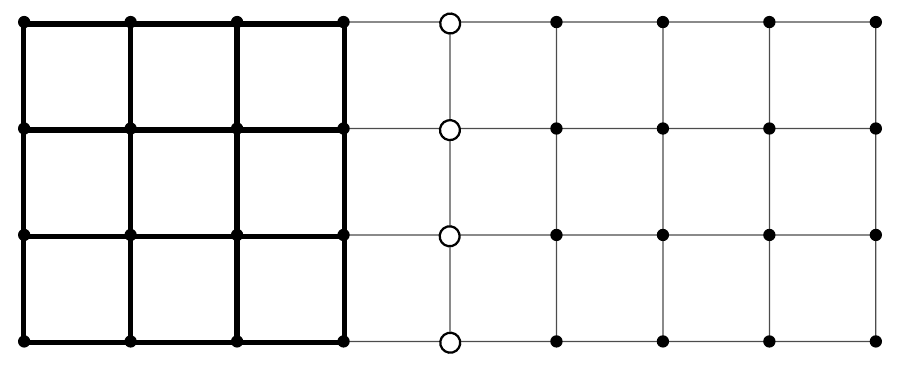}}
\hspace{1cm}
  \subfloat[Diagonal cut set (new)]{\includegraphics[width=0.4\textwidth]{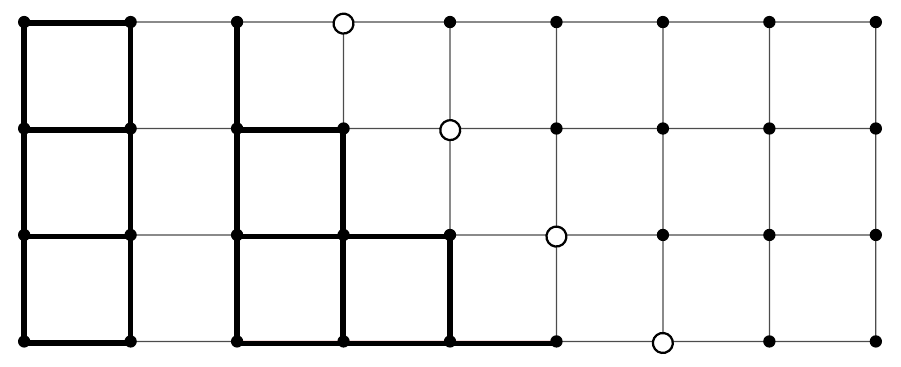}}
  \caption{Comparison of cutting methods}
  \label{fig:Upper}
\end{figure}

In \cite{alpert}, Alpert showed that $\tri{n}\le n-1+\tri{\ceil{ \frac{n-2}{2} }}.$  For $n\ge 3$, we can iterate this to get $\tri{n}\le 2n-2\floor{\log_2(n+1)}+1$ (see \ref{app:tri}).  Using this, we get a new upper bound of $$r(m,n)\le 3m-2\log_2(m+1)+1+r\left(m,\ceil{\frac{n-m}{2}}-1\right).$$  For $n\le N$, where $N=O\left(m^{\frac{3}{2}}\right)$, our bound is tighter (see \ref{app:gen}).

\section{Lower Bound for Square Grid Graphs}


\begin{thm}
For $m\ge 5$, $r(m,m)\ge m+r\left(\ceil{\frac{2m}{5}}-1,\ceil{\frac{2m}{5}}-1\right).$
\end{thm}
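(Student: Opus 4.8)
The plan is to use the cut-set framework developed for the $4\times n$ case, now applied to the square grid $\sq{m}=G_{m,m}$. As in Lemma 3 and the definition of $C(f)$, I would fix a minimal ranking $f$ of $\sq{m}$, let $\alpha$ be the largest repeated label, and let $C(f)$ be a minimal cut set separating two vertices labeled $\alpha$. The first step is to argue that $|C(f)|\ge m$: any set of fewer than $m$ high vertices cannot separate the two copies of $\alpha$ in an $m\times m$ grid, because a genuine vertex cut separating opposite sides of the grid must contain at least one vertex in each of the $m$ rows (or each of $m$ columns), by a Menger-type / disjoint-paths argument. This gives the ``$m+{}$'' term: removing the $m$ cut-set vertices costs $m$ labels, and it remains to show that one of the resulting connected components of $\sq{m}\backslash C(f)$ contains an induced square subgrid of side $\ceil{\tfrac{2m}{5}}-1$.

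The heart of the argument is the geometric claim that after deleting any $m$ vertices from $\sq{m}$, some connected component still contains a $k\times k$ subgrid with $k=\ceil{\tfrac{2m}{5}}-1$; this is where I expect the real work to be, and it is almost certainly the content of the ``Large Cut Set Lemma'' foreshadowed by the \texttt{\textbackslash newtheorem} declaration \lem3 at the top of the paper. The strategy is a counting/pigeonhole argument on how $m$ deleted vertices can block all large axis-aligned subgrids. I would partition the $m\times m$ grid into a regular array of candidate $k\times k$ blocks (or consider all $\left(m-k+1\right)^2$ translated positions of a $k\times k$ window) and show that $m$ points are too few to hit every one of them while also disconnecting the survivors; one makes the constant $\tfrac{2}{5}$ emerge by balancing the number of deleted vertices needed to ``pierce'' a dense packing of blocks against the number needed to actually separate the grid into pieces. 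The monotonicity Lemma 2 then finishes: the component containing the untouched $k\times k$ block has rank number at least $r\!\left(k,k\right)$, and adding back the $m$ high cut-set labels yields $r(m,m)\ge m+r(k,k)$.

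Concretely, the order of steps I would carry out is: (1) reduce to a minimal ranking and introduce $\alpha$, $T_\alpha$, $C(f)$ exactly as in Section 3; (2) prove $|C(f)|\ge m$ via the disjoint-paths lower bound on vertex cuts of the grid; (3) reduce, as in Section 3.3, to the case $|C(f)|=m$ exactly, using the same ``adding extra cut vertices costs at most as much as it removes'' accounting; (4) establish the Large Cut Set Lemma showing a surviving $\left(\ceil{\tfrac{2m}{5}}-1\right)$-square; and (5) combine via Lemma 2 to get $r(m,m)\ge m+r\!\left(\ceil{\tfrac{2m}{5}}-1,\ceil{\tfrac{2m}{5}}-1\right)$.

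The main obstacle is step (4): proving that $m$ deleted vertices cannot simultaneously destroy every large square subgrid in the surviving components. A naive packing bound would let one choose $k$ roughly $m/2$ by a single untouched row/column argument, but the deletions need not lie on a line and can be spread to sabotage many windows at once, so a careful extremal analysis of the worst-case placement of the $m$ cut-set vertices — likely splitting into cases according to how the $m$ vertices distribute among rows and columns — is what pins the constant down to $\tfrac{2}{5}$ rather than something larger. Getting a clean, case-free version of that extremal bound, and verifying the ceiling/floor bookkeeping so that the stated $\ceil{\tfrac{2m}{5}}-1$ is exactly achievable for all $m\ge 5$, is the delicate part.
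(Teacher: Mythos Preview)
Your step (2) contains a genuine error. The cut set $C(f)$ separates two \emph{specific} vertices carrying the label $\alpha$; it is not required to separate opposite sides of the grid. Menger's theorem gives $m$ only as the minimum cut between the left and right boundary (or top and bottom), not between two arbitrary interior vertices, so nothing prevents $|C(f)|<m$ in a minimal ranking --- think of two $\alpha$'s sitting near a corner, separated by a short cut. The paper does \emph{not} prove $|C(f)|\ge m$; instead it disposes of the case $|C(f)|<m$ directly by observing that then one component already contains $\sq{\ceil{m/2}}$, which is more than enough. You would need either that observation or a minimality argument in the style of Lemma~4 ruling out small cut sets, not a Menger bound.

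Your step (4) is where the real divergence from the paper lies, and your pigeonhole-on-windows sketch is not the mechanism that produces the constant $\tfrac{2}{5}$. The paper's argument is structural rather than counting: it parametrizes cut sets by the number $k$ of consecutive columns they span, shows (Lemma~6) that for $|C(f)|=m$ the extremal shape within a given $k$ is a \emph{diagonal cutpath} (any ``deviation'' from the diagonal only enlarges the surviving rectangles), and then computes for each $k$ an explicit family $S_{m,k}$ of rectangular subgrids guaranteed to survive, together with an ``extension'' operation trading two rows for one extra column. The bound $\ceil{2m/5}-1$ falls out of optimizing over $k$: one checks (Appendix~H) that at $k=\floor{(m-1)/5}+2$ the $0$th extension already contains $\sq{\ceil{2m/5}-1}$, and an induction on $k$ handles larger widths. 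The lemma you guessed to be the ``Large Cut Set Lemma'' is in fact the reduction from $|C(f)|>m$ back to $|C(f)|=m$ (your step (3)), not the existence of the surviving square; the surviving square is Lemma~6 plus the column-width casework. A pure hitting-set count over translated $k\times k$ windows does not obviously exploit the connectivity constraint on $C(f)$ and is unlikely to reach $\tfrac{2}{5}$ without that geometric input.
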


First note that for $|C(f)|<m$, one of the subgraphs induced by removing the cut set will contain $\sq{\ceil{\frac{m}{2}}}$. Without loss of generality, let the top and bottom rows of the square grid both contain at least one cut set vertex (in rough terms, the cut set ``goes from top to bottom''). Furthermore, let the number of consecutive columns that the cut set occupies in $S_m$ be $k$. Clearly our lower bound holds for $k=1$, so now, we only consider cut sets at least two sets wide.  

We first define some terms, examples of which are illustrated in Figure~\ref{fig:cutpath}. Define two cut set vertices to be \textit{cut-neighbors} if their column and row indices each differ by at most 1.  A \textit{cutpath} between two points $v_0$ and $v_n$ is the sequence of cut set vertices $[v_0,v_1,...,v_{n}]$, where $v_i$ and $v_{i+1}$ are cut-neighbors for all $0\le i<n$ and a $\textit{sub-cutpath}$ to be any subsequence of the cutpath. Because some rows will contain multiple cut set vertices, there is no bijection between a cut set and its column sequence.  Instead, call the bottommost row of the cut set row 0; we will denote a vertex in row $r$ and column $c$ as vertex $(r,c)$ and a sub-cutpath as $[(r_0,c_0),(r_1,c_1),...,(r_n,c_n)]$.  

 Note that for $|C(f)|=m$, there is a bijection between the sequence of vertices in a cutpath and the sequence of column indices of those respective vertices.  Denote this \textit{column sequence} by $[c_0,c_1,...,c_{n}]$, where $c_i$ is the index of the column that $v_i$ occupies.

We will let the index of the leftmost column be 0 and that of the rightmost column be $k-1$.  Then define a \textit{left-diagonal cutpath} (\textit{right-diagonal cutpath}) to be a cutpath with column sequence $[0,1,,...,k-1,k-2,...,1,0]$ ($[k-1,k-2,...,0,1,...,k-2,k-1]$).  

Define a \textit{left-deviation} (\textit{right-deviation}) to be a subsequence of the form $[c_i,c_{i+1}]$ in the cutpath's column sequence, where $v_{i+1}$ is in a row higher (lower) than that of $v_i$ and $c_{i+1}-c_i\le 0$ ($c_{i+1}-c_i\ge 0$) and the closest vertices preceding and following it in the cutpath that lie on columns 0 and $k-1$ lie, respectively, on column 0 (column $k-1$) and column $k-1$ (column 0) .

Define a \textit{deformation of $v$} as the shifting of $v$ and all vertices following $v$ in the cutpath up one unit, as long as $v$ is still a cut-neighbor of the vertex preceding it in the cutpath.  

For a rectangular subgrid $G_{x,y}$ with its leftmost column on the leftmost column of the original $\sq{m}$ grid, define its \textit{first extension} to be its supergraph $G_{x,y+1}$ minus the rows containing vertices on the cutpath.  The $i$th extension is defined to be the extension of the $(i-1)$th extension.  Note that if a given subgrid in $S$ must exist, then its extension must also exist.  An example is shown for $\sq{10}$ and $k=7$ in Figure~\ref{fig:cutpath}.  Finally, define the \emph{$k$th subgrid extension} to be the subgrid $G_{x,y}$ in $S_{m,k}$, defined in Lemma \ref{lem:smk}, so that $(x,y)=2k-3-2i,\ceil{\frac{m-k}{2}}+i$.

To prove our lower bound for $r(m,m)$, we will essentially show that for any given $k$, the cut set that gives a ranking of fewest labels is a diagonal cutpath (obviously, orientation is irrelevant). We will first deal with cut sets of $m$ vertices, for which we will show in Lemma 6 that for a given $k$, a certain family of subgrids is guaranteed to exist for any cut set of exactly $m$ vertices (it will eventually be from this family that we derive the recursive term in our lower bound). In particular, we will show that cut sets with deviations can be seen as equivalent to diagonal cutpaths. We will then deal with cut sets of more vertices with the Large Cut Set Lemma, showing through Lemmas 7 and 8 that a ranking corresponding to a cut set of more than $m$ vertices will use no fewer colors than a particular ranking with a cut set of exactly $m$ vertices. The recursive term in our lower bound then comes from computation of the largest square subgrid that exists in the families of induced subgrids from Lemma 6 ranging over all values of $k$.

\begin{lem}
For $k\le\floor{\frac{m+2}{3}}$, any cut set of $m$ vertices will induce subgraphs $G_{x,y}$, where \begin{equation*}(x,y)\in\left\{\left(m,\ceil{\frac{m-k}{2}}\right)\right\}\cup\left\{\left(x',y'\right): x',y'>0, \ x'=2k-3-2t, \ y'=\ceil{\frac{m-k}{2}}+t\right\}.\end{equation*}\label{lem:smk}
\end{lem}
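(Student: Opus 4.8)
The plan is to treat the two families in the claim separately, since they come from different features of the cut set. First I record the structural fact I will use throughout: for a cut set of exactly $m$ vertices that meets both the top and bottom rows of $\sq{m}$, reaching row $m-1$ from row $0$ in steps that change the row by at most one forces every step to advance exactly one row, so the cutpath meets each of the $m$ rows once and is faithfully encoded by a column sequence $[c_0,\dots,c_{m-1}]$ valued in the $k$ occupied columns. The first listed subgrid, $\left(m,\ceil{\frac{m-k}{2}}\right)$, is then immediate: the $m-k$ columns not touched by the cut set split into a clean left block and a clean right block of full height $m$ whose widths sum to $m-k$, so the wider of the two has width at least $\ceil{\frac{m-k}{2}}$ and one induced component contains $G_{m,\ceil{\frac{m-k}{2}}}$.

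The staircase family $\left(2k-3-2t,\ceil{\frac{m-k}{2}}+t\right)$ is the crux, and I would read it off from the \emph{extension} operation applied to a diagonal cutpath. Orient the wide clean block to the left (by reflection this is WLOG) and take a diagonal cutpath whose single excursion into the band bulges to the right: it stays in the leftmost band column except during one right-and-back excursion to the far band column, which spans $2k-1$ rows and leaves the leftmost band column free on its $2k-3$ interior rows. Intersecting the wide block with exactly those $2k-3$ rows gives the $t=0$ member $\left(2k-3,\ceil{\frac{m-k}{2}}\right)$. Now apply extensions one band column at a time: adjoining the next band column deletes precisely the rows in which the cutpath occupies it, and on a diagonal cutpath that is exactly two rows (once going out, once coming back). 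Hence the $t$th extension yields $\left(2k-3-2t,\ceil{\frac{m-k}{2}}+t\right)$, the chain terminating at $t=k-2$ when the height reaches $1$. Since an extension of an induced subgrid is again induced, the whole family is guaranteed.

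The remaining work is to justify the ``two rows per adjoined column'' bookkeeping for an \emph{arbitrary} width-$k$, $m$-vertex cut set rather than a literal diagonal one, and this is where I expect the main obstacle. Matching the framing preceding the lemma, the strategy is to show the diagonal cutpath is extremal, i.e.\ induces the \emph{smallest} subgrids, so that proving the family for it forces the same subgrids to be contained for every cut set. Any non-diagonal cutpath contains a left- or right-\emph{deviation}; applying a \emph{deformation} at the first deviation straightens the cutpath toward the diagonal while the rows deleted under extension only shift rather than multiply, so no already-induced subgrid is lost. Inducting on the number of deviations reduces the general case to the diagonal computation above. The delicate points, which I expect to dominate the proof, are (i) verifying that each deformation remains a valid cutpath (the shifted vertex stays a cut-neighbor of its predecessor) and never moves a guaranteed subgrid out of its component, with separate attention to deviations abutting the top and bottom rows, and (ii) confirming that the hypothesis $k\le\floor{\frac{m+2}{3}}$ is exactly what keeps the height-$(2k-1)$ excursion inside the $m$ available rows and leaves the full-width height-$(2k-3)$ base rectangle intact, so that every extension in the chain of length $k-2$ is well defined.
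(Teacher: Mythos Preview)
Your plan matches the paper's proof in its skeleton: both note that an $m$-vertex cut set meets each row exactly once, both read off the full-height block $G_{m,\ceil{(m-k)/2}}$ from the clean columns, and both obtain the staircase family by running extensions against a diagonal cutpath, with each adjoined band column deleting exactly two rows.

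The one real difference is the reduction of an arbitrary cutpath to the diagonal one, which you flag as the main obstacle and propose to handle by straightening deviations one at a time via deformations and inducting on their count. The paper sidesteps this entirely with a single geometric step: take the bottommost left-deviation $[c_\ell,c_{\ell+1}]$ and the topmost right-deviation $[c_r,c_{r+1}]$ and extend the lines of slope $-1$ and $+1$ through $c_{\ell+1}$ and $c_{r+1}$; the resulting diagonal cutpath lies inside the region bounded by the original cutpath, so the left component for the diagonal is already a subgraph of the left component for the original. No induction or bookkeeping on deformations is needed. Note also that the paper's \emph{deformation} shifts a tail of the cutpath up one unit, which for an $m$-vertex cutpath filling rows $0$ through $m-1$ would push the top endpoint off the grid; that operation is reserved for the $|C(f)|>m$ analysis in the Large Cut Set Lemma, so your straightening move would have to be something different.

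A small correction on the hypothesis: fitting a $(2k-1)$-row excursion into $m$ rows only needs $k\le(m+1)/2$, which is weaker than $k\le\floor{(m+2)/3}$. The paper does not invoke the bound inside this proof; it marks the threshold beyond which a diagonal cut set need not sit inside an $m$-vertex cut set at all, and the regime $k>\floor{(m+2)/3}$ is treated separately afterward with ``small extensions.''
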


Let $S_{m,k}$ be the set of subgraphs $G_{x,y}$ induced by removing the cut set. 

\begin{proof}Because $|C(f)|=m$, each row in this path will contain exactly one vertex.  

Pick two vertices $v$ and $v'$ of the cut set that occupy the leftmost column so that the cutpath of cut set vertices connecting them contains at least one vertex in the rightmost column and no other vertices in the leftmost column.  The first condition has to be true or the cut set occupies fewer than $k$ columns.  The column sequence will be of the form $[0,c_1,c_2,...,c_i,c_{i+1}=k-1,c_{i+2},...,c_j=0]$.

\begin{figure}[h]
\begin{center}
 \subfloat[A pair of cut-neighbors is circled; the cutpath is connected by a dotted line. The left-deviation is bolded.  The first extension of the bolded $5\times 4$ subgrid is the dotted $3\times 5$ subgrid.  The second extension is the white $1\times6$ subgrid.  Left-diagonal cutpath vertices are white.]{\label{fig:cutpath}\includegraphics[width=0.3\textwidth]{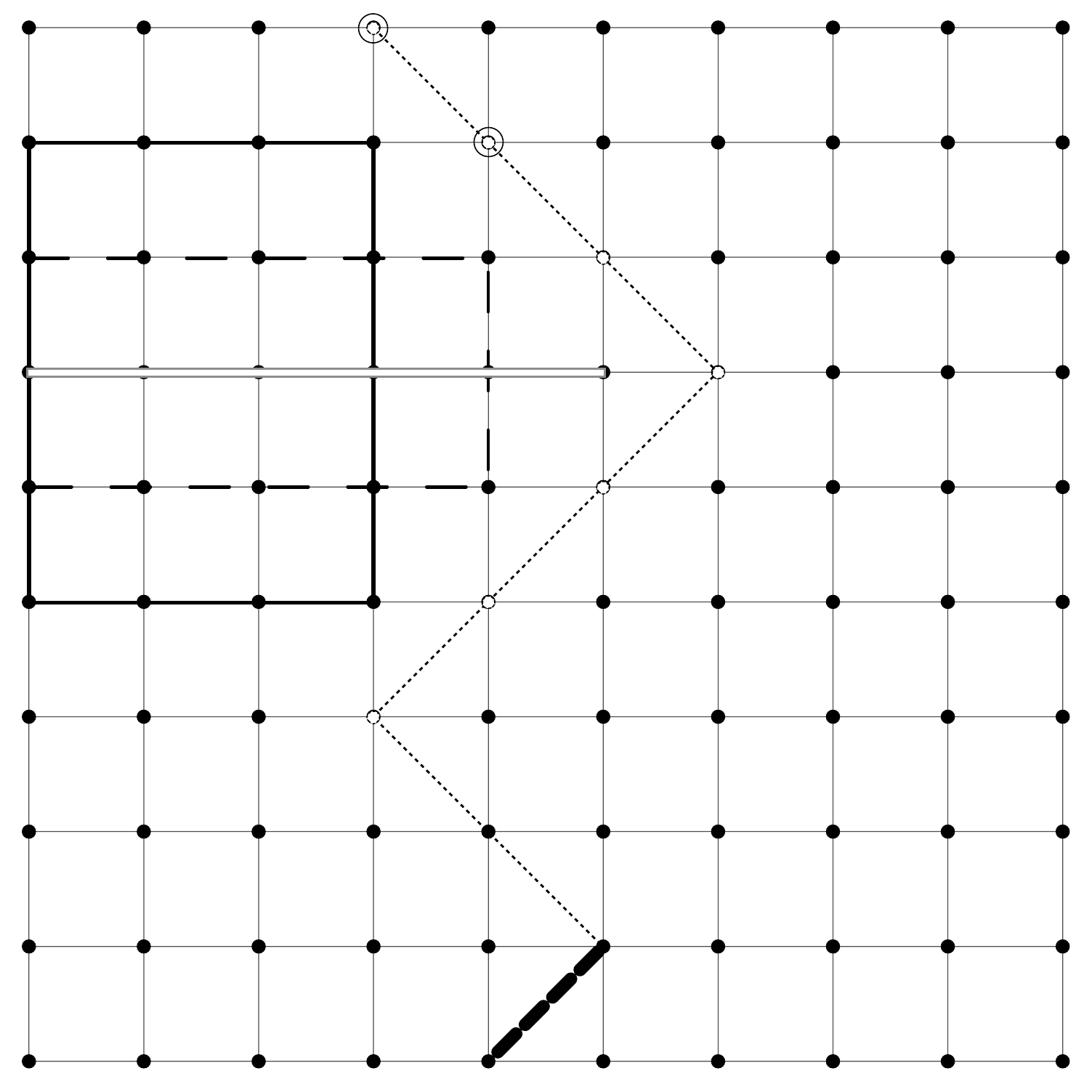}}
\hspace{0.5cm} 
 \subfloat[A cutpath with no deviations (solid line) induces a connected component to the left that is the subgraph (grey vertices) of the connected component induced by a cutpath with deviations (dotted line).  Extra vertices are bolded; deviations are bolded.]{\label{fig:deviation}\includegraphics[width=0.3\textwidth]{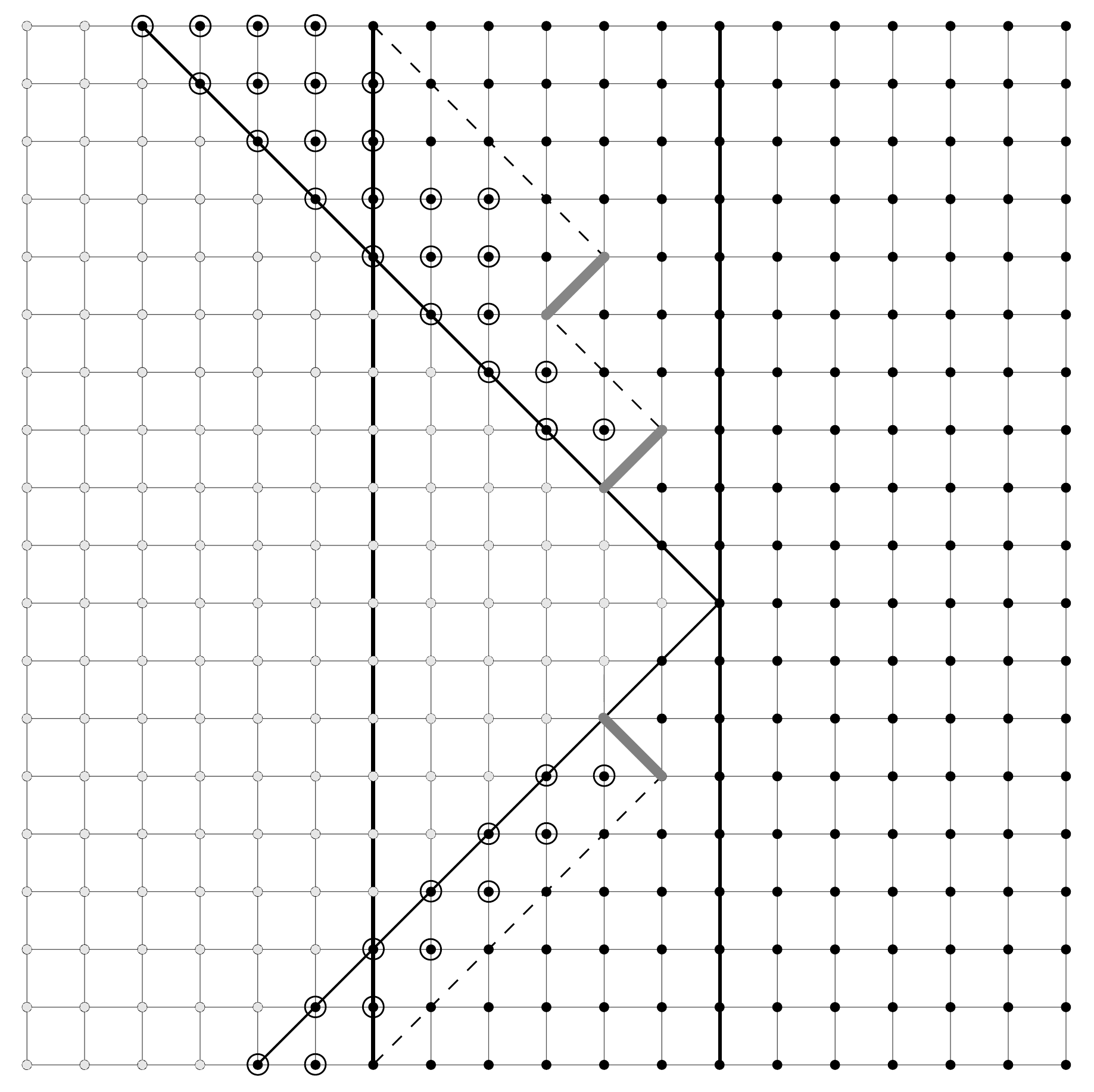}}
\hspace{0.5cm}
 \subfloat[Bolded vertices represent components that the long diagonal splits the columns into.  One of these must contain $\floor{\frac{m-k}{2}}$ consecutive vertices.  This yields a $G_{m-1-\floor{\frac{m-k}{2}},\floor{\frac{m-k}{2}}+1}$ subgrid.  There are exactly $k-\left(\ceil{\frac{m-k}{2}}+1\right)-1$ (circled) small extensions (dotted subgrid).]{\label{fig:smallextension}\includegraphics[width=0.3\textwidth]{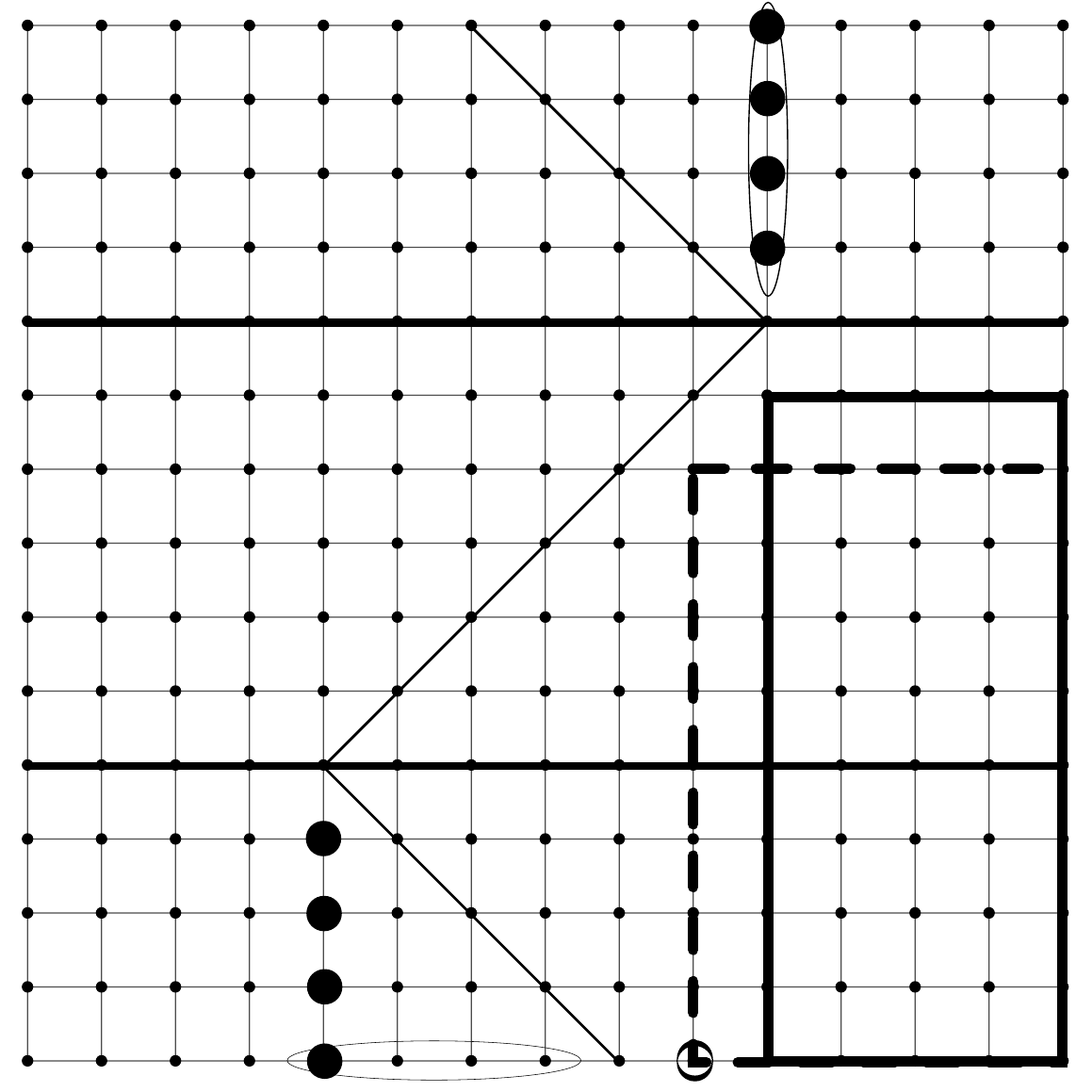}}
\end{center}
\caption{Examples of deviations and extensions and small extensions.}
\label{fig:deviation1}
\end{figure}

Consider the bottommost left-deviation $[c_l,c_{l+1}]$ and the topmost right-deviation $[c_r,c_{r+1}]$.  Note that extending the lines of slope $-1$ and 1 containing $c_{l+1}$ and $c_{r+1}$, respectively gives a diagonal cutpath falling within the boundaries of the cutpath, so the left connected component induced by removing a diagonal cutpath will be the subgraph of any cutpath with at least one deviation.  An example of this argument is shown in Figure~\ref{fig:deviation1}.  Thus, for $|C(f)|=m$, we will only consider diagonal cutpaths.

Clearly, removing the cut set's $k$ columns induces a $G_{\ceil{\frac{m-k}{2}},m}$ subgraph.  Consider the part of the induced subgraph in the same rows as the vertices in the left-diagonal cutpath.  Then a rectangle with at least $\ceil{\frac{m-k}{2}}$ columns exists between the diagonal cutpath's two endpoints, and it occupies all rows between those of the cutpath's endpoints, the rectangle has a width of $2k-3$ rows.  Note that because a diagonal cutpath will occupy two vertices in each column of the grid except the last column, an extension of a grid $G_{x,y}$ that must exist will have dimensions $(x-2)\times(y+1)$, giving us the desired set of rectangular subgrids that are guaranteed to exist for $G_{m,m}$.\end{proof}

\begin{lem3}For each ordered pair $(x,y)$ so that there is an $x\times y$ subgrid in $S_{m,k}$ for a cut set of $m$ vertices, if $|C(f)|>m$, then there is at least one subgrid $G_{x,y}$ of $\sq{m}$ that contains at most $|C(f)|-m$ vertices.\end{lem3}

To prove the Large Cut Set Lemma, we first prove two lemmas.

\begin{lem}
If the greatest lower bound on the rank number of all subgraphs induced by cut sets with exactly $m$ vertices is $r_{\text{min}}$, then the subgraph induced by  cut set where at least two cut-neighbors are in the same column, has a rank number of at least $r_{min}$.
\end{lem}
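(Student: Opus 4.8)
The plan is to reduce a cut set with a same-column cut-neighbor pair to a cut set with exactly $m$ vertices, for which the lower bound $r_{\min}$ is available by hypothesis (together with Lemma~\ref{lem:smk}), and then to transfer that bound through the subgraph monotonicity of Lemma 2. Write the pair as $u=(r,c)$ and $w=(r+1,c)$, two cut set vertices occupying the same column $c$ on consecutive rows, and let $H$ be the connected component of the complement that carries the subgrid we intend to recurse on. The governing idea is that a vertical pair is \emph{redundant for the column span}: $u$ and $w$ lie in one column, so eliminating one of them does not decrease the number $k$ of consecutive columns the cut occupies, and therefore should not shrink the side subgrid guaranteed by Lemma~\ref{lem:smk}.

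Concretely, I would remove $w$ and apply the deformation operation (shifting the tail of the cutpath so that the vertical step is absorbed into a single diagonal step), producing a cut set $\widehat{C}$ that still separates the two $\alpha$-labeled vertices but meets column $c$ in one fewer vertex. Iterating this over every same-column pair drives the cut down to exactly one vertex per row, yielding a cut set $D$ with $|D|=m$ of the type analyzed in Lemma~\ref{lem:smk}. The conclusion would then follow by exhibiting the induced component $H_D$ of this straightened $m$-vertex cut as a subgraph of $H$, so that Lemma 2 gives $\chi_r(H)\ge\chi_r(H_D)\ge r_{\min}$, where the last inequality is exactly the hypothesis that $r_{\min}$ lower-bounds the rank number of every subgraph induced by an $m$-vertex cut set.

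I expect the main obstacle to be getting the direction of containment right. Because inserting cut vertices generally \emph{shrinks} components, I cannot simply delete the redundant vertex $w$ and invoke monotonicity directly: deleting $w$ enlarges the complement, which is the wrong direction. Instead I must track how the deformation repositions the boundary of $H$ and verify that this repositioning is always \emph{away from} the side subgrid—so that the recursed-upon grid in $H$ is never eroded even as the barrier is thinned. The key lemma to nail is therefore that each intermediate $\widehat{C}$ remains a genuine cut set (separation is preserved at every step of the straightening) while the guaranteed subgrid on the $H$ side only grows, which is what licenses the embedding $H_D\subseteq H$.

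A secondary technical point, which I would dispatch by a short direct check, is the boundary behavior when $u,w$ sit in the leftmost or rightmost column: there the deformation cannot shift past the grid edge, so the straightening must be carried out by relocating the stacked vertex inward rather than by a vertical shift, and one verifies separately that the resulting $m$-vertex cut still induces a component contained in $H$.
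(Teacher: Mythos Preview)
Your global straightening strategy is not what the paper does, and the obstacle you flag is a genuine gap, not a technicality. The embedding $H_D\subseteq H$ you aim for is simply false in general: once you delete the redundant vertex $w$ from the cut, $w$ itself lands in the complement and may well lie in $H_D$, so $H_D$ is typically strictly larger than $H$, not smaller. Your proposed fix---track only the guaranteed subgrid rather than the whole component---is the right instinct, but you have not supplied the argument, and carrying it out amounts to redoing the Large Cut Set Lemma from scratch.

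The paper avoids this entirely by working \emph{locally} and using a trade-off rather than a containment. It enumerates the nine possible local configurations of three consecutive columns in which a same-column cut-neighbor pair sits, and in each case rewrites the offending three-vertex sub-cutpath as a two-vertex (diagonal) sub-cutpath together with one ``extra'' cut vertex. The key device is then Lemma~3, not Lemma~2: adjoining that one extra vertex to the cut raises $|C(f)|$ by one while lowering $\chi_r$ of the induced component by at most one. Hence the quantity $|C(f)|+\chi_r(H)$ is preserved under each local rewrite, and one reduces to a cut without vertical steps. In effect the paper is establishing $|C(f)|+\chi_r(H)\ge m+r_{\min}$ through this additive trade-off, which is what the surrounding argument actually needs; it does not attempt to prove $\chi_r(H)\ge r_{\min}$ via an embedding of components, and reading the lemma that literally sends you after a containment that does not exist.
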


\begin{proof}
As shown in Figure~\ref{fig:deformations}, there are nine configurations for a set of three consecutive columns where the middle column contains two cut-neighbors.  

\begin{figure}[h]
\begin{center}
 \subfloat[]{\label{fig:vertical1}\includegraphics[height=0.18\textheight]{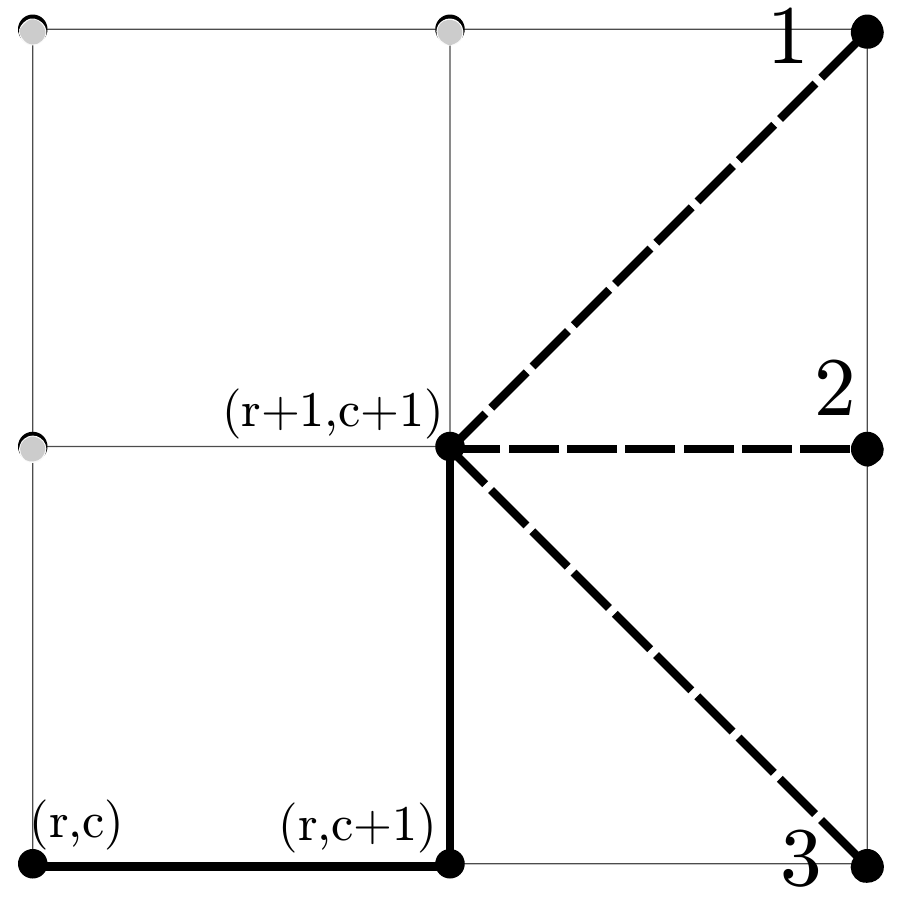}}
\hspace{0.8cm}
 \subfloat[]{\label{fig:vertical2}\includegraphics[height=0.18\textheight]{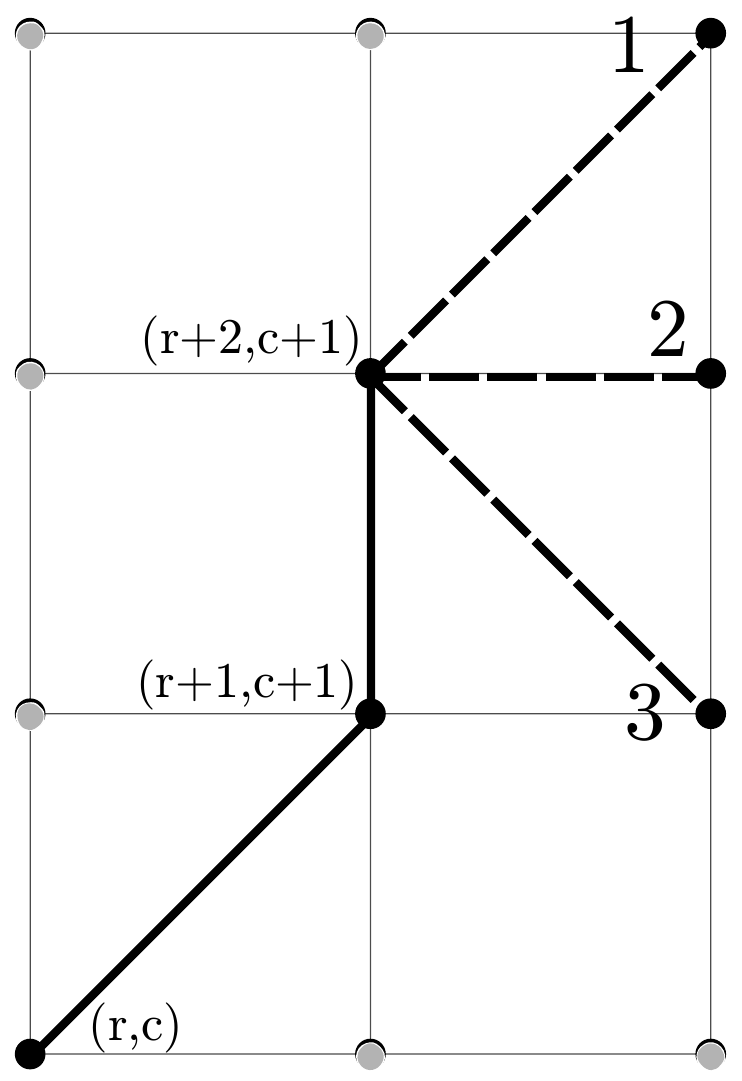}}
\hspace{0.8cm}
 \subfloat[]{\label{fig:vertical3}\includegraphics[height=0.18\textheight]{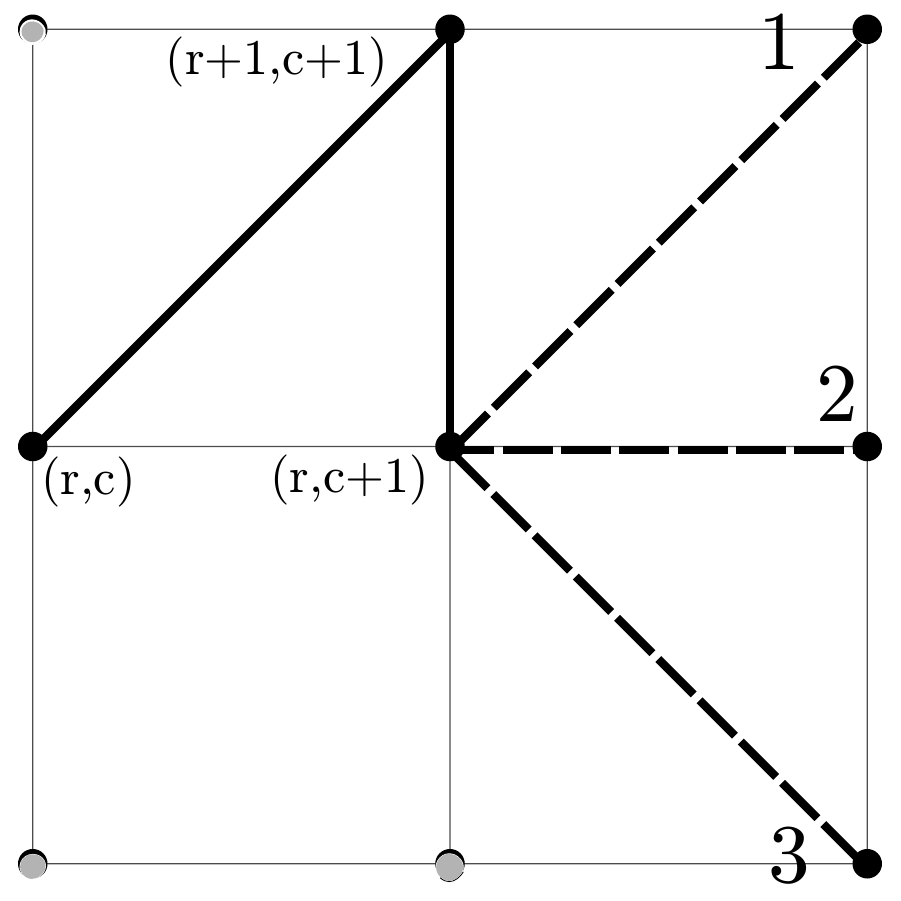}}
\end{center}
\caption{Cutpaths not obtainable by a set of deformations still satisfy the lower bound.}
\label{fig:deformations}
\end{figure}

In the first three cases shown in Figure~\ref{fig:vertical1}, the sub-cutpath $[(r,c),(r,c+1),(r+1,c+1)]$ is equivalent to a sub-cutpath of $[(r,c),(r+1,c+1)]$ with an extra cut vertex $(r,c+1)$.  If the induced subgraph of the cut set containing $[(r,c),(r+1,c+1)]$ has rank number of at least $r_{\text{min}}$, adding $(r,c+1)$ to the cut set will increase $|C(f)|$ by one while decreasing the rank number of the induced subgraph to the right of the cut set by at most 1, and it will still satisfy the lower bound. 

Case 1 of Figure~\ref{fig:vertical2} is equivalent to the statement that cutpaths with deviations still satisfy the lower bound, proven in Lemma 6.  Case 2 of Figure~\ref{fig:vertical2} follows similar to the three cases in Figure~\ref{fig:vertical1}.  In Case 3 of Figure~\ref{fig:vertical2}, sub-cutpath $[(r,c),(r+1,c+1),(r+2,c+1),(r+1,c+2)]$ is equivalent to $[(r,c),(r+1,c+1),(r+1,c+2)]$ with an extra vertex $(r+2,c+1)$, and if the induced subgraph of the cut set containing $[(r,c),(r+1,c+1),(r+1,c+2)]$ has a certain rank number, then adding $(r+2,c+1)$ will increase $|C(f)|$ by one while decreasing the rank number of the subgraph to the left of the cut set by at most one.

The three cases in Figure~\ref{fig:vertical3} follow by reasoning similar to that for case 3 of Figure~\ref{fig:vertical2}.
\end{proof}

Thus, we only consider when cut-neighbors are in different columns. Observe that such cutpaths can be obtained through some sequence of deformations of and additions of vertices to a cutpath of length $m$.  In particular, $d$ deformations of vertices on a left-diagonal cutpath with endpoints $(0,0)$ and $(0,2k-2)$ will move the original endpoint to $(0,2k-2+d)$.  Then there are at least $|C(f)|+d$ vertices in the new cut set $C'(f)$.  In general, if cut set $C'(f)$ contains a sub-cutpath that is obtained through a set of $d$ deformations of $C(f)$, $C'(f)\ge C(f)+d$.

\begin{lem}
If the subgraph induced by a cut set, where all cut set vertices after $(0,2k-1+d)$ in the cutpath are part of a diagonal cutpath, has a rank number of at least $r(\ceil{\frac{2m}{5}}-1,\ceil{\frac{2m}{5}}-1)$, then the subgraph induced by a cut set, where the vertices after $(0,2k-1+d)$ are obtained by some deformations of a diagonal cutpath, has a rank number of at least $r(\ceil{\frac{2m}{5}}-1,\ceil{\frac{2m}{5}}-1)$.
\end{lem}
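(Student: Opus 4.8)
The plan is to reduce the deformed cut set to the corresponding diagonal one, using the same geometric idea already exploited for deviations in Lemma~\ref{lem:smk}, and then to invoke subgraph monotonicity (Lemma 2). A single deformation is the atomic move that turns a diagonal step into a deviation: shifting $v$ and all later cutpath vertices up by one unit slides that portion of the barrier away from the component carrying the recursive term, so my first step is to argue that the induced component grows rather than shrinks.

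Concretely, I would induct on the number $d'$ of deformations applied to the tail following $(0,2k-1+d)$. The base case $d'=0$ is the hypothesis itself, since the two cut sets coincide. For the inductive step, undo the last deformation --- shift the affected vertices back down one unit --- to obtain a cut set whose tail uses $d'$ deformations; by the inductive hypothesis its induced component has rank number at least $r(\ceil{\frac{2m}{5}}-1,\ceil{\frac{2m}{5}}-1)$. Re-applying the final deformation slides the tail of the barrier upward, and exactly as in the deviation argument of Lemma~\ref{lem:smk} the new induced component contains the old one as a subgraph. Lemma 2 then gives that the rank number cannot decrease, so it remains at least $r(\ceil{\frac{2m}{5}}-1,\ceil{\frac{2m}{5}}-1)$, closing the induction.

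The delicate point --- and the step I expect to be the main obstacle --- is accounting for the vertices that each deformation forces into the cut set (recall that $d$ deformations enlarge $|C(f)|$ by $d$). I must check that each such extra vertex lands on the new barrier or inside the opposite component, never inside the component that carries the square subgrid giving the recursive term; otherwise the clean subgraph containment, and with it the appeal to Lemma 2, would fail. Here I would lean on the bookkeeping from Lemma~\ref{lem:smk} that a diagonal cutpath occupies two vertices in every column but the last, so that an upward shift vacates exactly the positions the enlarged component needs. The constrained cases, where $v$ must remain a cut-neighbor of its predecessor or sits against the grid boundary, are handled by the same configuration-by-configuration analysis used for the vertical cut-neighbor pairs in the preceding lemma.
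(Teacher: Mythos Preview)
Your approach diverges from the paper's, and the gap you yourself flag is real --- indeed, the paper's own analysis shows it cannot be closed in the way you hope.

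The paper does not argue containment. It inducts on deformations via a counting balance: one extra deformation takes $C(f)$ to $C'(f)$ with $|C'(f)|\ge |C(f)|+1$, while at the same time removing \emph{at most one} additional vertex from each fixed target subgrid in $S_{m,k}$. By Lemma~3, deleting one vertex lowers a rank number by at most one, so the net quantity $|C'(f)|+(\text{rank of what survives of the subgrid})$ does not decrease, and the lower bound is preserved. Crucially, the paper explicitly \emph{permits} the extra cut-set vertex to land inside the relevant subgrid; it merely caps the damage at one.

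That is exactly the scenario that kills your containment claim. You hope the forced extra vertex always lands on the barrier or in the opposite component, but the paper's argument concedes the contrary possibility: the filler vertex needed to bridge the row vacated by the upward shift may sit inside the component carrying the square subgrid. Hence ``the new induced component contains the old one as a subgraph'' is not true in general, and your appeal to Lemma~2 fails. The bookkeeping you cite from Lemma~6 --- two cutpath vertices per column except the last --- tells you where the \emph{shifted} barrier goes, not where the \emph{additional} vertex goes; that vertex can appear on either side. To repair the inductive step you would have to abandon pure containment and adopt the paper's trade-off: track how many cut-set vertices meet the subgrid and offset each increment against the matching increment in $|C(f)|$.
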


\begin{proof}
Consider a sub-cutpath beginning with $(0,2k-1+d)$ and ending with the first vertex in the cutpath that lies on the line of slope 1 of vertices containing $(k,k)$, and let the subgraph induced by a cut set $C(f)$ that has this sub-cutpath contain $\sq{\ceil{\frac{2m}{5}}-1}$. Let $C'(f)$ be the cut set with the same vertices as $C(f)$ before $(0,2k-1+d)$ but with an additional deformation on the sub-cutpath after $(0,2k-1+d)$. Then the extra deformation in $C'(f)$ will remove at most one more vertex from each subgrid in $S_{m,k}$ than will $C(f)$.  However, $C'(f)\ge C(f)+1$, so the rank number of the induced subgraph will stay the same or increase.
\end{proof}

We now prove the Large Cut Set Lemma.

\begin{proof}
We prove by induction that the cut set of $|C(f)|>m$ vertices will occupy at most $|C(f)|-m=d$ vertices in at least one subgrid in $S_{m,k}$.  Note that the extension of a subgrid will contain at most one more cut vertex than the subgrid because we are considering cut sets where columns contain no cut-neighbors in the same column.  In columns after the $\floor{\frac{d-1}{2}}$th column, the diagonal cutpath beginning with $(0,2k-1+d)$ occupies rows lower than a left-diagonal cutpath with endpoints $(0,0)$ and $(0,2k-1)$ would.  Before this point, the $j$th extension contains at most two more cut set vertices than the $(j-1)$th extension, so the $\floor{\frac{d-1}{2}}$th extension will have at most $1+2\floor{\frac{d-1}{2}}\le d$ cut set vertices, and because the sub-cutpath beginning with $(0,2k-1+d)$ is diagonal, it will have a vertex in the bottom row.  Assume $G_{\ceil{\frac{m-k}{2}},2m-3-2d}$ contains at most $d$ cut set vertices and a cut set vertex $v$ in the bottom row.  The extension of this will not contain $v$, but, again, it will contain at most one more cut set vertex than the subgrid, completing the induction.
\end{proof}  

So to prove our main result, that $r(m,m)\ge m+r(\ceil{\frac{2m}{5}}-1,\ceil{\frac{2m}{5}}-1)$, it suffices to prove this for $|C(f)|=m$.

\begin{proof}
We first show for each cut set column width, the subgrids in $S_{m,k}$ contain $G_{\ceil{\frac{2m}{5}}-1,\ceil{\frac{2m}{5}}-1}$.  In \ref{app:square}, we show that for $k=\floor{\frac{m-1}{5}}+2$, the 0th subgrid extension must contain $r(\ceil{\frac{2m}{5}}-1,\ceil{\frac{2m}{5}}-1)$ as a subgraph, so for $k=\floor{\frac{m-1}{5}}+2$, the 0th subgrid extension must contain $\sq{\ceil{\frac{2m}{5}}-1}$ as a subgraph, so for $k\le\floor{\frac{m-1}{5}}+2$, the 0th subgrid extension must contain this subgraph.  

We now prove by induction that for $\floor{\frac{m-1}{5}}+2<k\le\ceil{\frac{m+2}{3}}$, one extension for each $k$ must contain $\sq{\ceil{\frac{2m}{5}}-1}$.  If for $i$ columns, the $n$th extension contains $\sq{\ceil{\frac{2m}{5}}-1}$, then the $n$th extension for $i+1$ columns will have at least $\ceil{\frac{2m}{5}}-1$ columns but exactly $\ceil{\frac{2m}{5}}+1$ rows.  The $(n+1)$th extension will have $\ceil{\frac{2m}{5}}$ rows and $\ceil{\frac{2m}{5}}$ columns.  

However, note that for $k>\floor{\frac{n+2}{3}}$, a diagonal cut set will not necessarily exist.  If $|C(f)|=m$, then there will be a sub-cutpath with column sequence $[k-1,k-2,..,1,0]$, because a deviation will only increase the dimensions of the subgrids that are guaranteed to exist.  Then either the left- or rightmost column of the cut set will contain at most $\floor{\frac{m-k}{2}}$ consecutive vertices not belonging to the cut set.  Of the remaining vertices on that column, only one is in the cut set, so the column will contain at least $m-1-\floor{\frac{m-k}{2}}$ consecutive vertices not belonging to the cut set.  This guarantees the existence of a $G_{m-1-\floor{\frac{m-k}{2}},\floor{\frac{m-k}{2}}+1}$ subgrid.  Note that for $i< k-(\ceil{\frac{m-k}{2}}+1)$, the $(i-1)$th extension of a subgrid will have exactly one more row than the $i$th extension.  Call such an extension a \emph{small extension}.  For $i\ge k-(\ceil{\frac{m-k}{2}+1})$, the $(i-1)$th extension will have two more rows than the $i$th extension.

If $m-k$ is even, then after $k-(\ceil{\frac{m-k}{2}}+1)-1$ small extensions, we have a subgrid $G_{m-k+1,k-1}$.  We prove by induction that if for $i$ columns, the $n$th subgrid extension contains $\sq{\ceil{\frac{2m}{5}}-1}$, where $i\le k-(\ceil{\frac{m-k}{2}}+1)-1$, then for $i+1$ columns, the $(n+1)$th subgrid extension contains $\sq{\ceil{\frac{2m}{5}}-1}$.  Indeed, this is true for $k=\floor{\frac{m+2}{3}}+1$ columns.  If this is true for $i$ columns, then for $i+1$ columns, the number of small extensions is at least one more than the number for $i$ columns, so the $(n+1)$th extension is small and yields $G_{m-k+1,k-1}$, which contains $\sq{\ceil{\frac{2m}{5}}-1}$ by the inductive hypothesis.

If $m-k$ is odd, then after $k-(\ceil{\frac{m-k}{2}}+1)-1$ extensions, we have a subgrid $G_{m-k+2,k-2}$.  We can prove this contains $\sq{\ceil{\frac{2m}{5}}-1}$ similarly. \end{proof}

\begin{cor}$r(m,m)\ge\frac{5}{3}m-\frac{25}{9}$.\end{cor}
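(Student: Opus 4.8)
The plan is to obtain the bound by unrolling the recursive inequality $r(m,m)\ge m + r\!\left(\ceil{\frac{2m}{5}}-1,\ceil{\frac{2m}{5}}-1\right)$ just established. Writing $a(m)=\ceil{\frac{2m}{5}}-1$, I would iterate this bound: set $m_0=m$ and $m_{k+1}=a(m_k)=\ceil{\frac{2m_k}{5}}-1$, so that repeated substitution telescopes to
\begin{equation*}
r(m,m)\ \ge\ \sum_{k=0}^{N-1} m_k \ +\ r(m_N,m_N),
\end{equation*}
where $N$ is the first index at which $m_N$ falls below the range in which the recursion applies. Since $r$ is nonnegative the residual $r(m_N,m_N)$ may be discarded, or better, retained to supply slack, and the dominant behaviour is governed by $\sum_k m_k$.

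Because $m_{k+1}\approx \frac{2}{5}m_k$, the sequence $(m_k)$ decays geometrically with ratio $\frac{2}{5}$, so $\sum_{k\ge 0} m_k$ behaves like $\frac{1}{1-2/5}\,m=\frac{5}{3}m$; this is exactly where the leading coefficient $\frac{5}{3}$ originates. To pin down the additive constant I would pass to the shifted sequence $y_k=m_k+\frac{5}{3}$, whose scaling map $x\mapsto \frac{2}{5}x-1$ has fixed point $-\frac{5}{3}$; one checks $y_{k+1}\ge \frac{2}{5}y_k$, and the target constant is precisely $\frac{25}{9}=\left(\frac{5}{3}\right)^2$, so that the claim may be rewritten as $r(m,m)\ge \frac{5}{3}\!\left(m-\frac{5}{3}\right)$. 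Equivalently, I would run a potential-function induction on $\Phi(m)=r(m,m)-\frac{5}{3}m$, showing $\Phi(m)\ge \Phi(a(m))-(\text{per-step loss})$ together with nonnegativity of $\Phi$ at the base, so as to conclude $\Phi(m)\ge -\frac{25}{9}$.

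The delicate point, and the step I expect to be the main obstacle, is controlling the rounding and additive terms so they contribute only the fixed constant $\frac{25}{9}$ rather than accumulating. Indeed, the crude estimate $m_{k+1}\ge \frac{2}{5}m_k-1$ discards the help of the ceiling and loses an additive $\Theta(1)$ at every level, which over the $\Theta(\log m)$ iterations would degrade the bound all the way to $\frac{5}{3}m-\Theta(\log m)$. The ceiling in $a(m)=\ceil{\frac{2m}{5}}-1$ is therefore essential: one must use $\ceil{\frac{2m}{5}}\ge \frac{2m}{5}+\frac{1}{5}$ whenever $\frac{2m}{5}$ is not an integer, and argue that the worst-case decrement (which occurs only when $5\mid 2m$) cannot recur at every level, so the accumulated loss telescopes to a bounded constant rather than a growing one.

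Finally, since $r(m,m)$ is integer-valued, the stated inequality $r(m,m)\ge \frac{5}{3}m-\frac{25}{9}$ follows from the possibly fractional bound by rounding up, and I would dispose of the finitely many small $m$ below the range of validity of the recursion by direct verification against their known rank numbers, each of which already exceeds $\frac{5}{3}m-\frac{25}{9}$ comfortably and thus furnishes the positive base slack required above.
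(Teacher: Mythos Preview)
The paper gives no proof of this corollary; it is stated immediately after Theorem~2 and left implicit, so your plan to unroll the recursion is exactly the intended route.

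Your diagnosis of the obstacle is correct, but your proposed repair does not work. You assert that the worst-case residue class --- the one with $5\mid m$, where $a(m)=\tfrac{2m}{5}-1$ gets no help from the ceiling --- ``cannot recur at every level.'' In fact it can recur for as many consecutive levels as one likes: for instance
\[
1040\ \longrightarrow\ 415\ \longrightarrow\ 165\ \longrightarrow\ 65\ \longrightarrow\ 25
\]
is five consecutive multiples of $5$ under $a$, and longer such chains exist. Along any such chain each step loses exactly $\tfrac{5}{3}$ against the target $\tfrac{5}{3}m-\tfrac{25}{9}$ (since $m+\tfrac{5}{3}\bigl(\tfrac{2m}{5}-1\bigr)=\tfrac{5}{3}m-\tfrac{5}{3}$), so the accumulated deficit really is $\Theta(\log m)$, not $O(1)$.

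This is not merely a weakness of your write-up: the constant $\tfrac{25}{9}$ cannot be extracted from Theorem~2 together with the small-case values available in the paper. Already at $m=40$ the iteration yields
\[
r(40,40)\ \ge\ 40+15+5+r(1,1)\ =\ 61,
\]
and even substituting the sharper base value $r(5,5)\ge r(4,5)=8$ gives only $40+15+8=63$, whereas the corollary demands $r(40,40)\ge \tfrac{575}{9}>63$, i.e.\ at least $64$. The honest consequence of Theorem~2 is $r(m,m)\ge \tfrac{5}{3}m-O(\log m)$; the specific additive constant $\tfrac{25}{9}$ appears to be an oversight in the paper rather than something your argument is failing to capture.
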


\begin{cor}$\emph{tri}_n\ge\frac{5}{3}\floor{\frac{n}{2}}-\frac{34}{9}$.\end{cor}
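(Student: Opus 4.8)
The plan is to reduce the triangle-grid lower bound to the square-grid lower bound of Corollary 1 through a subgraph-containment argument. The key structural observation is that the triangle grid $\tria{n}$ contains an axis-aligned square grid $\sq{\floor{\frac{n}{2}}}$ as a subgraph. Indeed, forgetting the diagonal edges of $\tria{n}$ leaves a staircase-shaped piece of the integer lattice filling a right-triangular region with legs of length $n$, and the largest full square block of lattice points that fits inside such a region, placed flush against the right-angle corner, has side $\floor{\frac{n}{2}}$. I would make this precise by exhibiting the vertex set $\{(i,j): 1\le i,\, j\le \floor{\frac{n}{2}}\}$, verifying that it satisfies the triangular constraint, and noting that the subgraph of $\tria{n}$ it induces contains $\sq{\floor{\frac{n}{2}}}$.

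With the embedding in hand, Lemma 2 gives $\tri{n} = \chi_r(\tria{n}) \ge \chi_r\!\left(\sq{\floor{\frac{n}{2}}}\right) = r\!\left(\floor{\frac{n}{2}},\floor{\frac{n}{2}}\right)$. Applying Corollary 1 with $m=\floor{\frac{n}{2}}$ then yields $r\!\left(\floor{\frac{n}{2}},\floor{\frac{n}{2}}\right) \ge \frac{5}{3}\floor{\frac{n}{2}} - \frac{25}{9}$, and since $-\frac{25}{9}\ge -\frac{34}{9}$ the desired inequality follows at once. The gap between $-\frac{25}{9}$ and $-\frac{34}{9}$ is exactly $1$, and I would spend it on the small-$n$ regime where Corollary 1 is not directly available, namely $\floor{\frac{n}{2}}<5$ (recall Theorem 2, and hence Corollary 1, is stated for $m\ge 5$): for these finitely many $n$ I would either check the bound by hand or observe that the looser right-hand side is small enough to hold trivially.

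I expect the genuine obstacle to be the embedding step rather than the arithmetic. I need to fix the exact convention for $\tria{n}$ (its orientation and the precise number of lattice rows implicit in Figure~\ref{fig:tri}) and confirm that a square block of side $\floor{\frac{n}{2}}$ really does sit inside it, since an off-by-one in the triangle's dimensions would change the extractable square to side $\floor{\frac{n-1}{2}}$ and shift the constant. This is exactly where the generous $-\frac{34}{9}$ pays off: as long as the embedded square has side at least $\floor{\frac{n}{2}}$, the conclusion is safe, so the argument is insensitive to the precise boundary convention. The only content beyond Corollary 1 is thus the geometric claim $\tria{n}\supseteq \sq{\floor{\frac{n}{2}}}$, after which the corollary is pure substitution.
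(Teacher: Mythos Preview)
Your proposal is correct and essentially identical to the paper's own proof: the paper's argument consists of the single sentence ``This follows from the fact that $\tria{n}$ contains $G_{\floor{n/2},\floor{n/2}}$,'' after which Corollary~1 and Lemma~2 finish it exactly as you describe. Your additional care about the embedding convention and the small-$n$ range where Theorem~2 is not stated merely fills in details the paper leaves implicit.
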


\begin{proof}
This follows from the fact that $\text{tri}_n$ contains $G_{\floor{\frac{n}{2}}}$.
\end{proof}

The rank number for triangle and square grids is now bounded between two linear functions, setting the groundwork for completely determining $r(m,m)$ and $\tri{n}$.

\section{Conclusion}

In this paper, we have completely determined the rank number for $4\times n$ grids, improved the upper bounds for the rank numbers of general grids, and improved the lower bounds for the rank numbers of square grids from a logarithmic to linear.  As a corollary, we have improved the lower bounds for the rank numbers of triangle grids from logarithmic to linear as well.

Using our lower bound for square grid graphs, we may be able to determine the rank number function for general grid graphs.  Firstly, we can generalize the Merging Lemma to all grid graphs: if $G_{m,n}$ with a sticky end has a $\lambda$-ranking and $G_{m,n-1}$ has a $\lambda$-ranking, then $G_{m,m+2n-1}$ has a $(\lambda+m)$-ranking and $G_{m,3m+4n-2}$ has a $(\lambda+2m)$-ranking.  Furthermore, the motivation behind the Corner Lemma may be extended to grids of higher dimensions.  Finally, we conjecture that removing two corners in different columns from a sufficiently long grid will preserve the rank number of the grid.  One avenue of further research is thus to prove or disprove this conjecture and to use this in conjunction with our lower bound and the generalization of the Merging Lemma in order to completely determine $r(m,n)$.

For sufficiently long grid graphs $G_{m,n}$, where $m\le n$, we also conjecture that there always exist $r(m,n)$-rankings where $|C(f)|=m$.  Moreover, the new ideas about the cut set that we discuss in our proof for square grids are interesting objects of further study, in particular whether deformations or deviations will not yield rankings of fewer labels for other graphs.


\section{Acknowledgments}

Thanks go to Jesse Geneson (MIT) for his steadfast and generous support as a mentor; the Center for Excellence in Education, the Research Science Institute, and MIT for giving me the opportunity to conduct this research; Dr. Tanya Khovanova (MIT) for coordinating the RSI mathematics research program and for inspiring me to always ``look at the big picture"; Dr. Jake Wildstrom (University of Louisville) for his copious edits and excellent tutorship; Kartik Venkatram (MIT) for pointing out a slick binary representation of the conditions for $r(4,n)$; and Akamai Technologies and the Hipsman Family for sponsoring my stay at the Research Science Institute.

\begin{singlespace}
 


\end{singlespace}

%

\appendix


\section{Base Cases for $4\times n$ Grids}\label{app:basecase}

\begin{figure}[h]
\centering
\subfloat[7-ranking of $G_{4,4}$]{\includegraphics[height=1in]{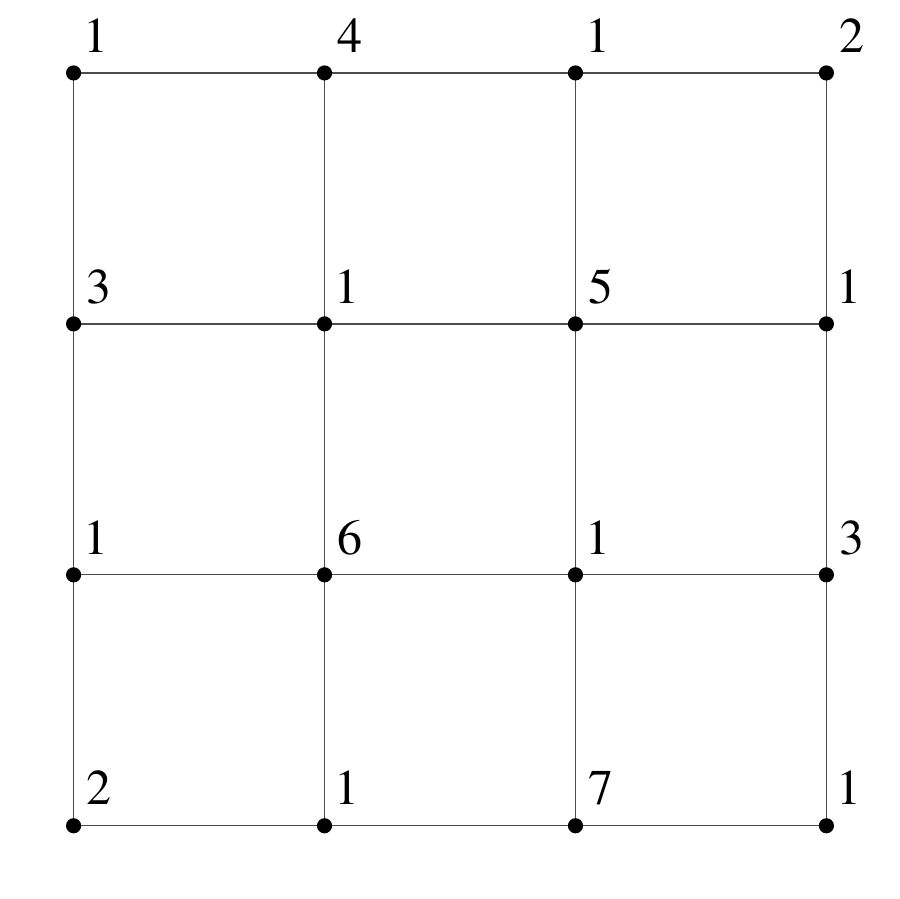}}
\subfloat[8-ranking of $G_{4,5}$]{\includegraphics[height=1in]{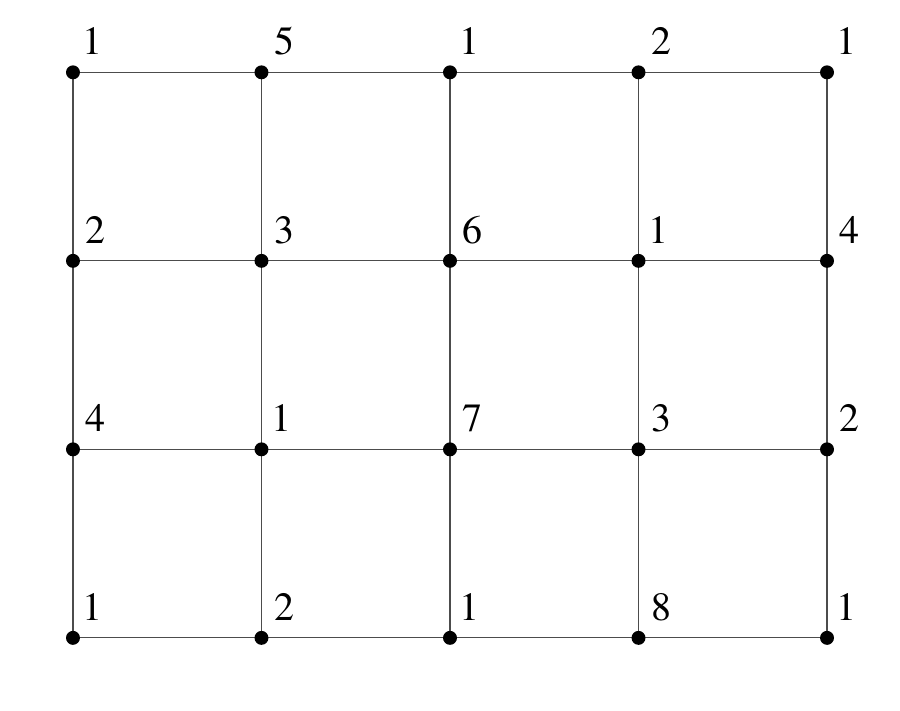}}
\subfloat[8-ranking of $G_{4,6}$]{\includegraphics[height=1in]{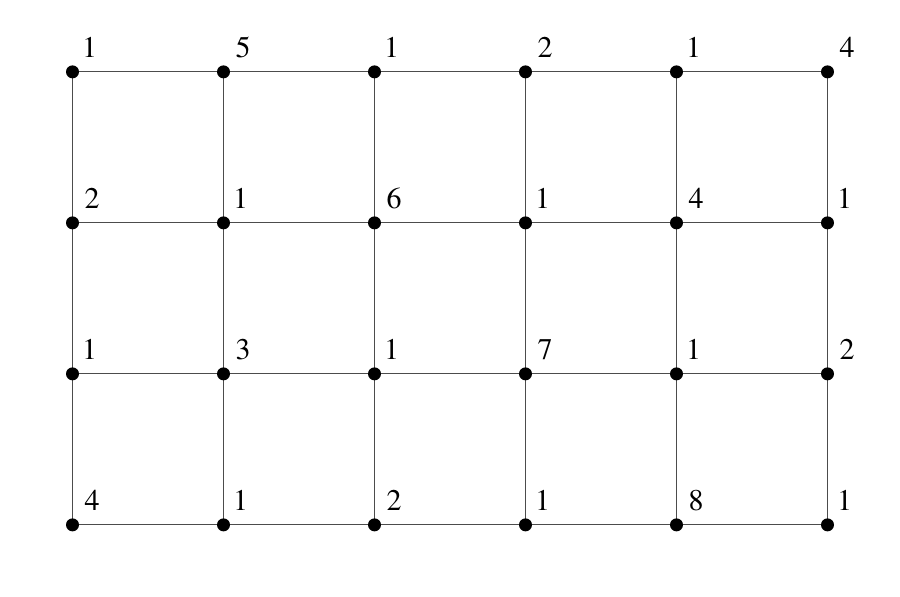}}
\\
\subfloat[9-ranking of $G_{4,7}$]{\includegraphics[height=1in]{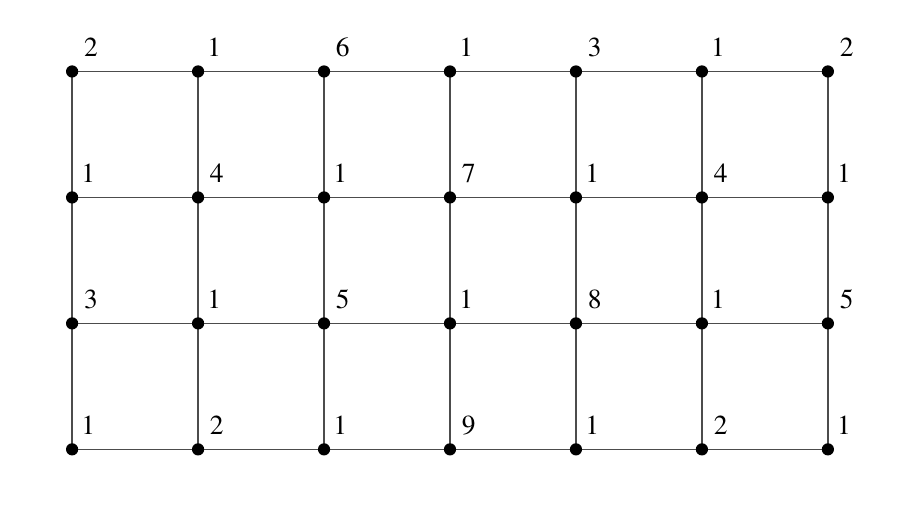}}
\subfloat[10-ranking of $G_{4,8}$]{\includegraphics[height=1in]{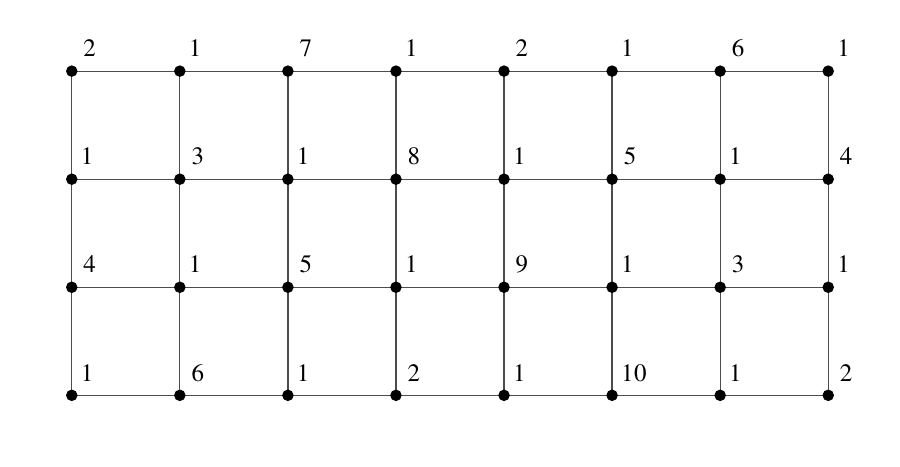}}
\caption{$r(n)$-ranking of $G_{4,n}$}
\label{fig:gridbase}
\end{figure}

In Figure~\ref{fig:gridbase}, we show by construction that $r(4)\le7$, $r(5)\le8$, $r(6)\le8$, and $r(7)\le9$.  Here, we prove that these upper bounds are indeed the actual rank numbers for these grids.

\begin{case}
$r(4)=7$
\end{case}

\begin{proof}



Assume a 6-ranking of $G_{4,4}$ exists.  If $\alpha=4$, then to separate two vertices labeled 4, the vertices 5 and 6 must be neighbors of a corner.  But then the corner vertex labeled 4 could be labeled 1, so the graph is not minimal.  If $\alpha=3$, then it is easxy to check that removing any cut set of three vertices will induce a 3-ranking of a $G_{2,4}$ subgraph, which is impossible.  If $\alpha=2$, then it is easy to check that removing any cut set of four vertices will induce a 2-ranking of a $P_5$ subgraph, which is impossible.  If $\alpha=1$, then besides the 5 vertices labeled with colors larger than 1, 11 vertices must be colored with 1, but then some pair of vertices colored 1 will be adjacent.

Thus, a 6-ranking of $G_{4,4}$ does not exist.\end{proof}

\begin{case}
$r(5)=8$
\end{case}

\begin{proof}Assume a 7-ranking of $G_{4,5}$ exists.  If $\alpha=5$, then to separate two vertices labeled 5, the vertices colored 6 and 7 must be neighbors of a corner, inducing a 5-ranking of a $G_{3,4}$ subgraph, which is impossible.  Similarly, if $\alpha=4$, then $|C(f)|\le 3$, and it is easy to check that removing any cut set of three vertices will induce a 4-ranking of either a $G_{2,5}$ or a $G_{3,3}$ subgraph.  If $\alpha=3$, then $|C(f)|\le 4$, and it is easy to check that removing any cut set of four vertices will still induce a 3-ranking of a $G_{2,3}$ subgraph, which is impossible.  If $\alpha=2$, then $|C(f)|\le 5$, and it is easy to check that removing any five vertices will still induce a 2-ranking of a $P_4$ subgraph, which is impossible.  If $\alpha=1$, then besides the 6 vertices labeled with colors larger than 1, 14 vertices must be colored with 1, but then some pair of vertices colored 1 will be adjacent.

Thus, a 7-ranking of $G_{4,5}$ does not exist.\end{proof}

\begin{case}
$r(6)=8$
\end{case}

\begin{proof}Because $P_4\times P_5$ is a subgraph of $P_4\times P_6$, $r(6)\ge r(5)=8$.  We show an 8-ranking of $P_4\times P_6$ below, so $r(6)=8$.\end{proof}

\begin{case}
$r(7)=9$
\end{case}

\begin{proof}Assume an 8-ranking of $G_{4,7}$ exists.  If $\alpha=6$, then to separate two vertices labeled 6, the two vertices labeled 7 and 8 must be neighbors of a corner.  But then the corner vertex could be labeled with a 1, and we would still have an 8-ranking, so the original 8-ranking is not minimal.  If $\alpha=5$, then $|C(f)|\le 3$, but it is easy to check that removing any three vertices in the cut set will induce a 5-ranking of either a $G_{2,7}$ or a $G_{3,5}$ subgraph, which is impossible.  If $\alpha=4$, then $|C(f)|\le 4$, but it is easy to check that removing any four vertices in the cut set will induce a 4-ranking of a $G_{3,3}$ subgraph, which is impossible.  If $\alpha=3$, then $|C(f)|\le 5$, but it is easy to check that removing any five vertices in the cut set will induce a 3-ranking of a $G_{2,3}$ subgraph, which is impossible.  If $\alpha=2$, then $|C(f)|\le 6$.  Note that each column can have at most three vertices labeled at most 2, but we have at least 7 remaining vertices to label, but only six vertices can be labeled more than 2.  If $\alpha=1$, then $|C(f)\le 7$, but after labeling the cut set vertices, we have 21 vertices to label with 1, but then some pair of vertices colored 1 will be adjacent.

Thus, an 8-ranking of a $G_{4,7}$ does not exist.\end{proof}

\begin{case}
$r(8)=10$
\end{case}

\begin{proof}Assume a 9-ranking of $G_{4,8}$ exists. If $\alpha=7$, then to separate two vertices labeled 7, the two vertices labeled 8 and 9 must be neighbors of a corner.  But then the corner vertex could be labeled with a 1, and we would still have a 9-ranking, so the original 8-ranking is not minimal.  If $\alpha=6$, then $|C(f)|\le 3$, but it is easy to check that removing any three vertices in the cut set will induce a 6-ranking of a $G_{3,6}$ subgraph and/or separate $G_{4,8}$ into two connected components, one of which is a vertex.  In the latter case, that vertex's label of 6 could be replaced with a 1 and still leave a 9-ranking, and the original ranking would not be minimal.  If $\alpha=5$, then $|C(f)|\le 4$, but it is easy to check that removing any four vertices in the cut set will induce a 5-ranking of a $G_{3,4}$ subgraph, which is impossible.  If $\alpha=4$, then $|C(f)|\le 5$, but it is easy to check that removing any five vertices in the cut set will induce a 4-ranking of a $G_{3,3}$ subgraph, which is impossible.  If $\alpha=3$, then $|C(f)|\le 6$, but it is easy to check that removing any six vertices in the cut set will induce a 3-ranking of a $G_{2,3}$ subgraph, which is impossible.  If $\alpha=2$, then $|C(f)|\le 7$, 
but as in the previous case, each column can have at most three vertices labeled at most 2, but we have at least 8 remaining vertices to label, but only seven vertices can be labeled more than 2.  If $\alpha=1$, then $|C(f)|\le 8$, but after labeling the cut set vertices, we have 24 vertices to label with 1, but then some pair of vertices colored 1 will be adjacent.

Thus, a 9-ranking of a $G_{4,8}$ does not exist.\end{proof}

\section{Base Cases for $4\times n$ Grids with Two Sticky Ends}\label{app:basesticky}

\begin{figure}[h]
\centering
\subfloat[5-ranking of $G_{4,1}$]{\includegraphics[height=1in]{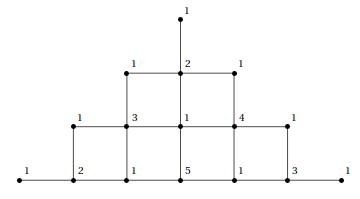}}
\subfloat[6-ranking of $G_{4,2}$]{\includegraphics[height=1in]{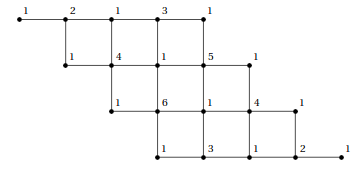}}
\\
\subfloat[7-ranking of $G_{4,3}$]{\includegraphics[height=1in]{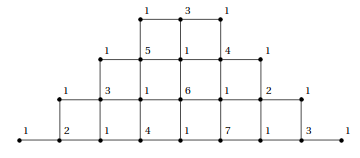}}
\subfloat[8-ranking of $G_{4,4}$]{\includegraphics[height=1in]{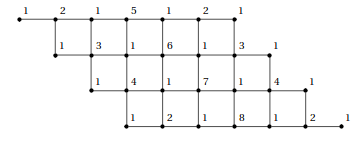}}
\caption{$r(n)$-ranking of $G_{4,n}$ with two sticky ends}
\label{fig:stickybase}
\end{figure}

Recall that we define $s(n)$ to be the rank number of a $4\times n$ grid with two sticky ends.  We show in Figure~\ref{fig:stickybase} that $s(1)\le 5$, $s(2)\le 6$, $s(3)\le 7$, and $s(4)\le 8$.

\section{Base Cases for $4\times n$ Grids with One Sticky End}\label{app:baseonesticky}

\begin{figure}[h]
 \centering
  \subfloat[6-ranking of $G_{4,3}$]{\label{fig:4-3sticky}\includegraphics[height=1in]{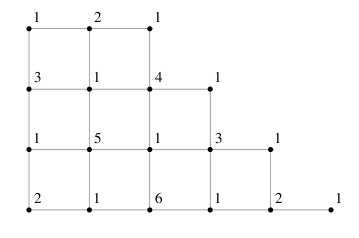}}
  \subfloat[7-ranking of $G_{4,4}$]{\label{fig:4-4sticky}\includegraphics[height=1in]{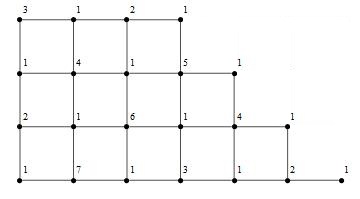}}
  \subfloat[8-ranking of $G_{4,5}$]{\label{fig:4-5sticky}\includegraphics[height=1in]{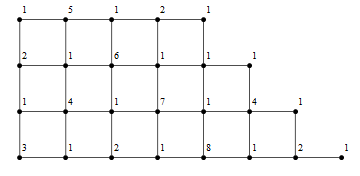}}
    \caption{$r(n)$-ranking of $G_{4,n}$ with one sticky end}
\label{fig:4sticky}
\end{figure}

\section{Base Cases for $4\times n$ Grids Missing Corners}\label{app:corner}

\begin{figure}[h]
 \centering
  \subfloat[5-ranking of $G_{4,3}$ minus two adjacent corners]{\label{fig:4-3cut1}\includegraphics[height=0.15\textwidth]{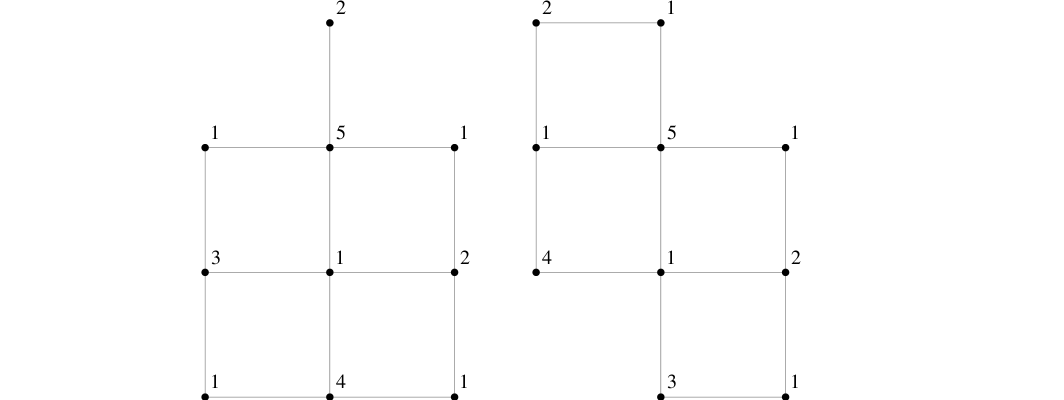}}
    \hspace{0.5cm}
  \subfloat[7-ranking of $G_{4,5}$ minus a corner]{\label{fig:4-5corner}\includegraphics[height=0.15\textwidth]{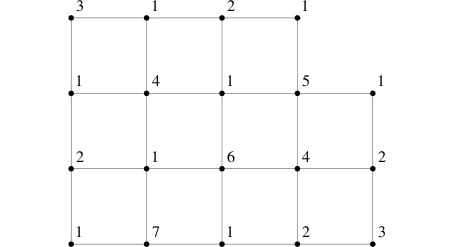}}
  \caption{$r(n)$-ranking of $G_{4,n}$ missing one or more corners}
\end{figure}

\section{Induced Subgraphs of $G_{4,13}$}\label{app:case4}

\begin{figure}[h]
 \centering
  \subfloat[]{\label{fig:13must1}{\includegraphics[width=0.26\textwidth]{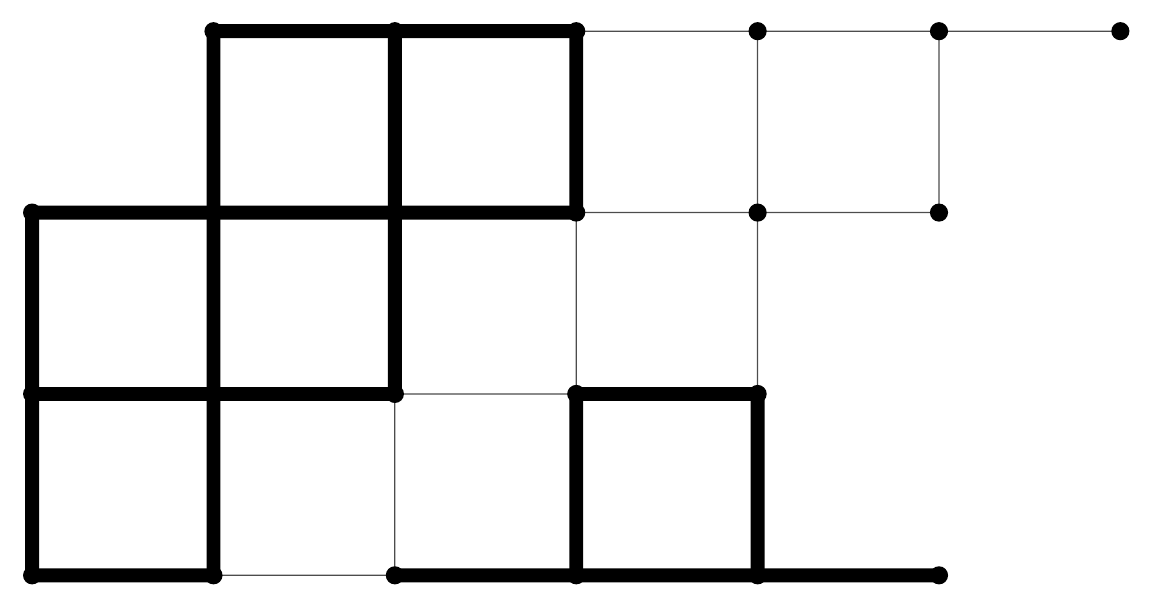}}}
  \subfloat[]{\label{fig:13must2}{\includegraphics[width=0.26\textwidth]{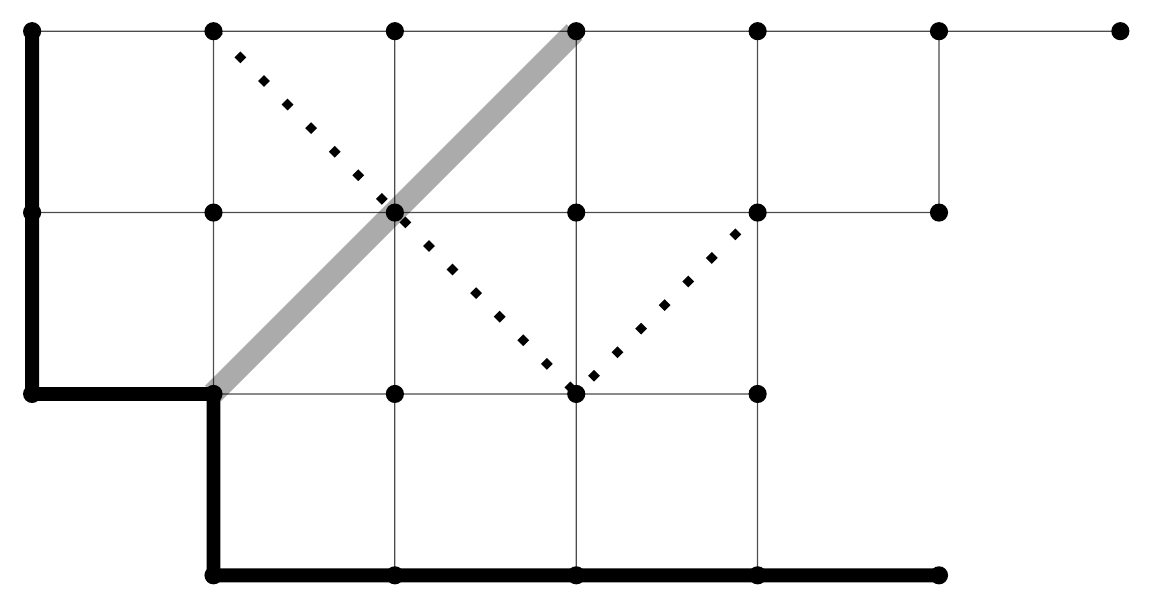}}}
  \subfloat[]{\label{fig:13must3}{\includegraphics[width=0.26\textwidth]{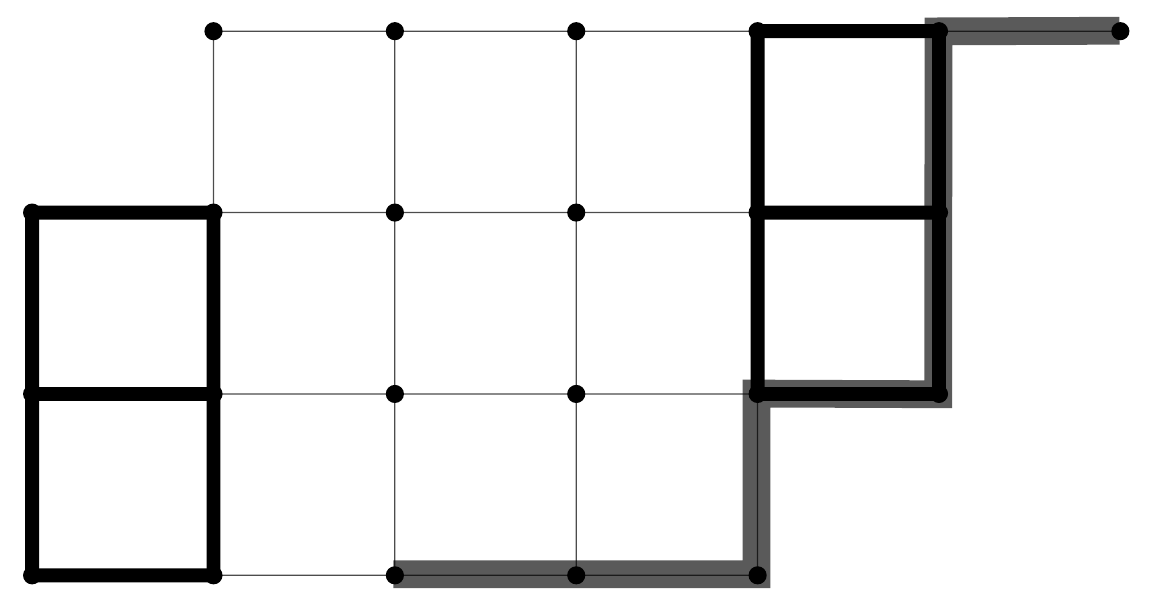}}}
  \subfloat[]{\label{fig:13must4}{\includegraphics[width=0.26\textwidth]{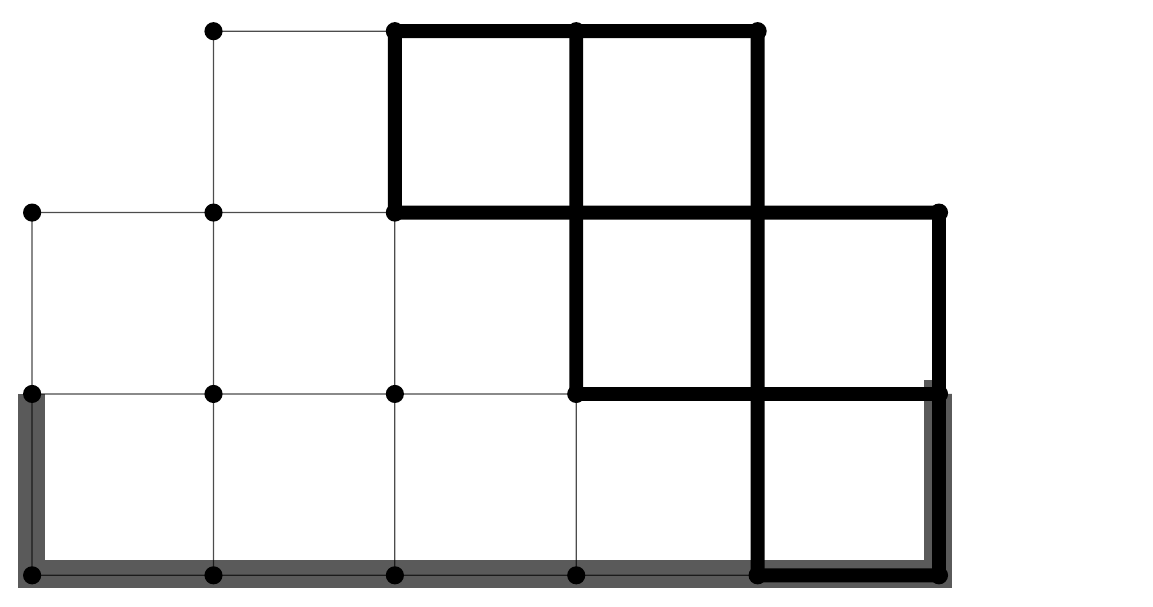}}}   
  \caption{Four graphs, one of which any induced subgraph of $G_{4,13}$ must be a subgraph}
\end{figure}

Observe that any cut set of three vertices of the graph in Figure~\ref{fig:13must1} will induce a $3\times 3$ grid, a $2\times 5$ grid, or the ``stairstep'' graph bolded in the figure, all of which have rank numbers of 5. Any cut set of four vertices will induce a $2\times 3$ grid or the $2\times 2$ grid with two additional vertices (call this a ``hill graph,'' also bolded in the figure, both of which have rank numbers of 4.

Any cut set of three vertices of the graph in Figure~\ref{fig:13must2} can induce the same subgraphs as it will for the graph in Figure~\ref{fig:13must1}, but if not, observe that the cut set must be the one shown in grey in Figure~\ref{fig:13must2}. Any cut set of four vertices can induce the same subgraphs as it will for the graph in Figure~\ref{fig:13must1}, but if not, as shown in Figure~\ref{fig:13must2}, it will induce a path graph of at least 8 vertices, which also has a rank number of 4.

Any cut set of three vertices of the graph in Figure~\ref{fig:13must3} will induce a $3\times 3$ grid or a ``stairstep'' graph. Observe that any cut set of four vertices that contains vertices on the bolded path graph of eight vertices and on both of the bolded $2\times 3$ grids must induce another $2\times 3$ grid or ``hill'' graph.

Finally, any cut set of three vertices of the graph in Figure~\ref{fig:13must4} will likewise induce a $3\times 3$ grid or a ``stairstep'' graph. Furthermore, for a cut set of four vertices to contain vertices on the bolded path graph of eight vertices, it must induce a $2\times 3$ grid.

%
%
%
%


%

\section{Explicit Upper Bound for Triangle Grids}\label{app:tri}

\begin{lem}
 For $n\ge3$, $$\text{\emph{tri}}(n)\le 2n-2\lfloor\log_2(n+1)\rfloor+1.$$
\end{lem}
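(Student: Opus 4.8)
The plan is to iterate Alpert's recurrence $\tri{n} \le n - 1 + \tri{\ceil{\frac{n-2}{2}}}$ and establish the closed form by strong induction on $n$. Writing $m = \ceil{\frac{n-2}{2}}$ for the recurrence's argument, the observation that makes the bookkeeping manageable is the change of variables $N = n+1$: one checks $m + 1 = \ceil{\frac{n}{2}} = \ceil{\frac{N-1}{2}}$, which is exactly the halving map appearing in the path-graph formula $\chi_r(P_N) = \floor{\log_2 N} + 1 = 1 + \chi_r(P_{\ceil{(N-1)/2}})$ recorded in Section 2. I will use this to control the logarithmic term.

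First I would record two elementary facts. From the path-graph identity above applied at $N = n+1$, we get $\floor{\log_2(m+1)} = \floor{\log_2\ceil{\frac{n}{2}}} = \floor{\log_2(n+1)} - 1$. Second, a parity check on $m$ gives $2m = n-2$ when $n$ is even and $2m = n-1$ when $n$ is odd, so in either case $2m \le n-1$.

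With these in hand, the inductive step---valid once $n$ is large enough that $m \ge 3$, i.e. $n \ge 7$---is a direct computation. Applying the recurrence and then the inductive hypothesis $\tri{m} \le 2m - 2\floor{\log_2(m+1)} + 1$ yields
\begin{align*}
\tri{n} &\le (n-1) + \tri{m} \\
&\le (n-1) + 2m - 2\bigl(\floor{\log_2(n+1)} - 1\bigr) + 1 \\
&= n + 2 + 2m - 2\floor{\log_2(n+1)} \\
&\le 2n + 1 - 2\floor{\log_2(n+1)},
\end{align*}
where the final inequality uses $2m \le n-1$; this is precisely the claimed bound $2n - 2\floor{\log_2(n+1)} + 1$.

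It remains to handle the base cases $n \in \{3,4,5,6\}$, for which $m = \ceil{\frac{n-2}{2}} \le 2$ and the inductive hypothesis does not yet apply. These I would check directly by running the recurrence down to the small values $\tri{1}$ and $\tri{2}$ (read off from the triangle grid by inspection) and comparing with the formula. The only real subtlety in the argument is the floor/ceiling bookkeeping---confirming $2m \le n-1$ and the logarithm identity for both parities of $n$---but since that identity is exactly the recurrence behind the path-graph formula already quoted in Section 2, it presents no genuine obstacle. Note that each inequality above is in fact tight, which is why the clean closed form emerges.
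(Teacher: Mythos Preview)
Your argument is correct; it is a strong-induction repackaging of the paper's direct unrolling. The paper simply iterates Alpert's recurrence $k$ times (after the convenient but slightly informal replacement of $\lceil(n-2)/2\rceil$ by $(n-1)/2$), sums the resulting telescoping expression to $(n+1)(2-2^{-k})-2k+\tri{(n-(2^k-1))/2^k}$, and then sets $k=\lfloor\log_2(n+1)\rfloor$ so that the residual triangle has size at most~$1$. Your version instead handles one step at a time and borrows the identity $\lfloor\log_2\lceil n/2\rceil\rfloor=\lfloor\log_2(n+1)\rfloor-1$ from the path-graph recurrence to track the logarithmic term; this sidesteps the paper's non-integer arguments and the explicit summation, at the cost of needing the four base cases $n\in\{3,4,5,6\}$ checked by hand. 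Both routes rest on the same recurrence, so neither is deeper than the other, though yours is arguably tidier. One small quibble: your closing remark that ``each inequality above is in fact tight'' is not quite right, since $2m\le n-1$ is strict for even $n$; this does not affect the proof.
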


\begin{proof}
 In \cite{alpert}, Alpert showed tha$k=\tri{n}\le n-1+\tri{\left\lceil \frac{n-2}{2} \right\rceil}.$  For convenience, we will use the inequality $\tri{n}\le n-1+\tri{\frac{n-1}{2}}.$  Iterating this $k$ times gives the inequality 
\begin{eqnarray*}
\tri{n} &\le& \left(\displaystyle\sum^{k}_{i=1}\frac{n-(2^{k-1}-1)}{2^{k-1}}\right)-k+\tri{\frac{n-(2^k-1)}{2^k}} \\
	&=&   (n+1)\left(2-\frac{1}{2^k}\right)-2k+\tri{\frac{n-(2^k-1)}{2^k}}
\end{eqnarray*}

We want the dimensions of the triangle to be at most $1\times 1$, so $0<\frac{n-(2^k-1)}{2^k}\le 1$ when $\frac{n+1}{2}\le 2^k<n+1\Rightarrow k=\lfloor\log_2(n+1)\rfloor$.  Then the above becomes
\begin{eqnarray*}
\tri{n}&<&(n+1)\left(2-\frac{1}{n+1}\right)-2\lfloor\log_2(n+1)\rfloor+\tri{1} \\
	&=&2n-2\lfloor\log_2(n+1)\rfloor+2
\end{eqnarray*}

Thus, $\text{tri}(n)\le 2n-2\lfloor\log_2(n+1)\rfloor+1$.
\end{proof}

\section{Comparing Upper Bounds for General Grids}\label{app:gen}
\begin{lem}
For $n\le N$, where $N=O\left(m^{\frac{3}{2}}\right)$, the bound $\chi_r(G_{m,n})\le 3m-2\log_2(m+1)+1+\chi_r\left(G_{m,\left\lceil\frac{n-m}{2}\right\rceil-1}\right)$ is better than  $\chi_r(G_{m,n})\le m+\chi_r(G_{m,\lceil\frac{n-1}{2}\rceil})$.
\end{lem}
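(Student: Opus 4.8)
The plan is to show that the new bound is numerically smaller than Alpert's bound for every $n$ up to a threshold $N=\Theta(m^{3/2})$, by unfolding both recursions to closed form and comparing the resulting explicit expressions. The key observation is that each recursion becomes a clean halving after an affine shift. Writing the new per-level cost as $p=3m-2\log_2(m+1)+1$, the new step $n\mapsto\ceil{\frac{n-m}{2}}-1$ sends (ignoring rounding) the quantity $n+m+2$ to $\frac{n+m+2}{2}$, while Alpert's step $n\mapsto\ceil{\frac{n-1}{2}}$ sends $n+1$ to $\frac{n+1}{2}$. Iterating each recursion down to a common small base width $w$ then yields
\[
B_{\text{new}}(n)\approx p\,\log_2\frac{n+m+2}{w+m+2}+\chi_r(G_{m,w}),\qquad
B_{\text{old}}(n)\approx m\,\log_2\frac{n+1}{w+1}+\chi_r(G_{m,w}),
\]
where the common base term $\chi_r(G_{m,w})$ cancels in the comparison.

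First I would establish these closed forms rigorously, which requires tracking the ceilings: each step introduces only a bounded additive error, and summing over the $O(\log n)$ levels contributes a lower-order term that cannot move the crossover from its $\Theta(m^{3/2})$ location. I would also reconcile the fact that the two recursions reach width $w$ in different numbers of steps, choosing $w$ (a small constant, say $w=1$) so that both base terms coincide and cancel, or else bounding their discrepancy by $O(m)$, which is again lower order.

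With the base terms cancelled, the inequality $B_{\text{new}}(n)\le B_{\text{old}}(n)$ reduces to
\[
p\,\log_2\frac{n+m+2}{w+m+2}\ \le\ m\,\log_2\frac{n+1}{w+1},
\]
that is, to leading order and using $p/m\to 3$ as $m\to\infty$, to comparing $\left(\frac{n+m}{m}\right)^{3}$ against $n$. The crux is solving this threshold relation. Substituting $n=c\,m^{3/2}$ gives $\frac{n+m}{m}\approx c\,m^{1/2}$, so the left side is $\approx c^{3}m^{3/2}$ while the right side is $\approx c\,m^{3/2}$; the two balance exactly at $c=1$. This shows the inequality holds for $n\le N$ with $N=\Theta(m^{3/2})$, giving the claimed $N=O(m^{3/2})$.

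The main obstacle is precisely this crossover computation: everything hinges on the ratio $p/m\to 3$, so that the new recursion pays roughly three times the per-level cost but strips off an extra $\tfrac{m}{2}$ columns per level; this turns the comparison into cube-versus-linear and thereby produces the $m^{3/2}$ scale. The remaining difficulties are bookkeeping, namely controlling the accumulated rounding error from the ceilings and handling the small-$n$ regime $n=O(m)$ separately, where the new recursion bottoms out in a single step and the comparison should be checked directly against base values rather than through the logarithmic closed form.
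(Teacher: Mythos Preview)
Your approach is essentially the same as the paper's: unfold each recursion via the affine shift (the paper uses $n+1\mapsto\frac{n+1}{2}$ and $n+m+1\mapsto\frac{n+m+1}{2}$, matching your observation), obtain closed logarithmic forms, and compare to extract the $m^{3/2}$ crossover. The only organizational difference is that the paper iterates the new bound only down to width $\approx m$ and absorbs the remaining $m\times m$ block into one extra step via the triangle decomposition, rather than pushing both recursions to a common base width and cancelling; either bookkeeping choice leads to the same comparison $\bigl(\tfrac{n+m}{m}\bigr)^{3}\lesssim n$ and hence $N=\Theta(m^{3/2})$.
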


\begin{proof}
To compare, we make both Alpert's and our bound explicit. Call the former $N_A$ and the latter $N_B$.  For simplicity, let $N_A\ge m+\chi_r(G_{m,\lfloor\frac{n-1}{2}\rfloor})$.  Iterating this $k$ times gives $N_A\ge mk+\chi_r\left(G_{m,\frac{n-(2^{k}-1)}{2^k}}\right).$  We want the dimensions of the rectangle to be at most $1\times m$, so $0<\frac{n-(2^k-1)}{2^k}\le 1$ when $\frac{n+1}{2}\le 2^k<n+1\Rightarrow \log_2(n+1)-1\le k<\log_2(n+1)$.  More specifically, $k=\lfloor\log_2(n+1)\rfloor$.  We rewrite $N_A$ to get
\begin{align*}
 N_A&\ge m\lfloor\log_2(n+1)\rfloor+\chi_r(G_{m,1})\\
&= m\lfloor\log_2(n+1)\rfloor+\lfloor\log_2(m)\rfloor+1\\
&\ge m(\log_2(n+1)-1)+\log_2m
\end{align*}
Similarly, let $N_B\le 3m-2\log_2(m+1)+1+\chi_r\left(G_{m,\frac{n-m-1}{2}}\right).$  Then iterating this $k$ times gives $$N_B\le k(3m-2\lfloor\log_2(m+1)\rfloor+1)+\chi_r\left(G_{m,\frac{n-(m+1)(2^k-1)}{2^k}}\right).$$  We want the dimensions of the remaining rectangle to be at most $m\times m$. But an $m\times m$ grid is a subgraph of an $m\times(m+2)$ grid, which can be colored with a middle diagonal cut set of $m$ vertices and two induced subgraphs $\tria{n}$.  Thus, we can remove the recursive term in at most $k+1$ iterations.  $\frac{n-(m+1)(2^k-1)}{2^k}>0$ when $k<\log_2\left(\frac{n+m+1}{m+1}\right)$.  We rewrite $N_B$ as $$N_B \le \left\lfloor\log_2\left(\frac{n+m+1}{m+1}\right)+1\right\rfloor(3m-2\lfloor\log_2(m+1)\rfloor+1)\le 3m\left(\log_2\left(\frac{2n+2m+2}{m+1}\right)\right).$$

$N_A\ge\log_2m\left(\frac{n+1}{2}\right)^m,$ while $N_B\le\log_2\left(\frac{2n+2m+2}{m+1}\right)^{3m}$.  Then
$$n\le \frac{(m+1)^{3/2}\sqrt[2m]{m}}{8\sqrt{2}}-1$$
$$\Longrightarrow\sqrt[m]{m}\left(\frac{n+1}{2}\right)\ge\left(\frac{4n+4}{m+1}\right)^3$$
$$\Longrightarrow\sqrt[m]{m}\left(\frac{n+1}{2}\right)\ge\left(\frac{4n+2}{m+1}\right)^3$$
$$\Longrightarrow\log_2m\left(\frac{n+1}{2}\right)^m\ge \log_2\left(\frac{2n+2m+2}{m+1}\right)^{3m}$$\end{proof}

\section{Existence of $\sq{\ceil{\frac{2m}{5}}-1}$ for Small $k$}\label{app:square}

\begin{lem}
 The 0th subgrid extension for a cut set of $k=\floor{\frac{m-1}{5}}+2$ columns contains $\sq{\ceil{\frac{2m}{5}}-1}$.
\end{lem}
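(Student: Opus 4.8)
The plan is to unwind the definition of the $0$th subgrid extension and reduce the containment to a single tight inequality that is checked by residues of $m$ modulo $5$. By Lemma~\ref{lem:smk}, for a cut set of exactly $m$ vertices occupying $k\le\floor{\frac{m+2}{3}}$ columns the induced family $S_{m,k}$ contains every $G_{x,y}$ with $(x,y)=\left(2k-3-2t,\ceil{\frac{m-k}{2}}+t\right)$ and $x,y>0$; the $0$th subgrid extension is the $t=0$ member $G_{2k-3,\ceil{\frac{m-k}{2}}}$. First I would confirm that $k=\floor{\frac{m-1}{5}}+2$ satisfies $k\le\floor{\frac{m+2}{3}}$, so that Lemma~\ref{lem:smk} applies and this subgrid is guaranteed to exist; this holds for every $m\ge 5$ with the single exception $m=6$, which must be handled separately. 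Positivity of both dimensions is then immediate, since $k\ge 2$ and $k<m$.

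Because $\sq{c}$ embeds in $G_{x,y}$ precisely when $\min(x,y)\ge c$, with $c=\ceil{\frac{2m}{5}}-1$ the claim is equivalent to
\begin{equation*}
\min\!\left(2k-3,\ \ceil{\frac{m-k}{2}}\right)\ge \ceil{\frac{2m}{5}}-1 .
\end{equation*}
The value $k=\floor{\frac{m-1}{5}}+2$ is calibrated near the balance point of the two dimensions $2k-3$ and $\ceil{\frac{m-k}{2}}$, each of which is about $\frac{2m}{5}$; the heart of the argument is that the shorter of the two lands exactly on $c$, so that the chosen column count is the largest one still forcing a square of side $c$.

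To finish I would write $m=5q+r$ with $0\le r\le 4$, which makes $\floor{\frac{m-1}{5}}$, $k$, $2k-3$, $\ceil{\frac{m-k}{2}}$, and $\ceil{\frac{2m}{5}}$ explicit in each residue class. A short computation then shows that in every case $\min\!\left(2k-3,\ceil{\frac{m-k}{2}}\right)=\ceil{\frac{2m}{5}}-1$ exactly: for $r\in\{3,4\}$ the $0$th subgrid extension is already the square $\sq{c}$, while for $r\in\{0,1,2\}$ it is $c\times(c+1)$ up to orientation, and hence still contains $\sq{c}$. This yields the stated containment.

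The work here is bookkeeping rather than insight, so the main obstacle is precision in the rounding: since the governing inequality is \emph{tight} in every residue class, no step can be slackened, and the floors and ceilings of $\floor{\frac{m-1}{5}}$, $\ceil{\frac{m-k}{2}}$, and $\ceil{\frac{2m}{5}}$ must be tracked exactly (note in particular that $k=q+1$ when $r=0$ but $k=q+2$ otherwise, which is what makes the shorter dimension flip between $2k-3$ and $\ceil{\frac{m-k}{2}}$). The only genuine side condition to watch is the applicability of Lemma~\ref{lem:smk}, namely $k\le\floor{\frac{m+2}{3}}$, which fails for $m=6$ and therefore forces that value to be verified by a direct argument rather than through this family.
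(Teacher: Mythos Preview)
Your proposal is correct and follows essentially the same route as the paper: both reduce to computing the dimensions $\left(2k-3,\ \ceil{\frac{m-k}{2}}\right)$ of the $0$th subgrid extension and then verify case by case modulo $5$ that the minimum dimension equals $\ceil{\frac{2m}{5}}-1$. The paper parametrizes the residues as $m=5x+1,\dots,5x+5$ (so $k=x+2$ uniformly) rather than $m=5q+r$ with $0\le r\le 4$, but the arithmetic is identical; your additional remark that the hypothesis $k\le\floor{\frac{m+2}{3}}$ of Lemma~\ref{lem:smk} fails at $m=6$ is a point the paper's appendix does not make explicit.
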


\begin{proof}

For $m=5x+1$, $5x+2$, $5x+3$, $5x+4$, and $5x+5$, $k=\floor{\frac{m-1}{5}}+2=x+2$.

For $m=5x+1$, the 0th subgrid extension has dimensions $(2x+1)\times(2x)$.  $\ceil{\frac{2m}{5}}-1=2x$, so the 0th subgrid extension contains $\sq{\ceil{\frac{2m}{5}}-1}$.  For $m=5x+2$, the 0th subgrid extension has dimensions $(2x+1)\times(2x)$.  $\ceil{\frac{2m}{5}}-1=2x$, so the 0th subgrid extension contains $\sq{\ceil{\frac{2m}{5}}-1}$.  For $m=5x+3$, the 0th subgrid extension has dimensions $(2x+1)\times(2x+1)$.  $\ceil{\frac{2m}{5}}-1=2x+1$, so the 0th subgrid extension contains $\sq{\ceil{\frac{2m}{5}}-1}$.  For $m=5x+4$, the 0th subgrid extension has dimensions $(2x+1)\times(2x+1)$.  $\ceil{\frac{2m}{5}}-1=2x+1$, so the 0th subgrid extension contains $\sq{\ceil{\frac{2m}{5}}-1}$.  For $m=5x+5$, the 0th subgrid extension has dimensions $(2x+1)\times(2x+2)$.  $\ceil{\frac{2m}{5}}-1=2x+1$, so the 0th subgrid extension contains $\sq{\ceil{\frac{2m}{5}}-1}$.

\end{proof}
%
%
%
%

\end{document}